\documentclass[11pt]{article}
\usepackage{amsthm,amssymb,amsfonts,amsmath,amscd}
\usepackage{setup}

\usepackage{comment}

\usepackage[mono=false]{libertine}
\usepackage[T1]{fontenc}
\usepackage[cmintegrals,libertine]{newtxmath}
\usepackage[cal=euler, calscaled=.95, frak=euler]{mathalfa}
\useosf
\usepackage[font=sf, labelfont={sf,bf}, margin=1cm]{caption}

\newlist{compitem}{itemize}{4}
\setlist[compitem,1]{nolistsep,label=$\bullet$}

\usepackage{hyperref}
\hypersetup{
	colorlinks=true,
		citecolor=blue!60!black,
		linkcolor=red!60!black,
		urlcolor=green!40!black,
		filecolor=yellow!50!black,
	breaklinks=true,
	pdfpagemode=UseNone,
	bookmarksopen=false,
}
\colorlet{link}{red!60!black}

\makeatletter
\newcommand{\labeltext}[3][]{%
    \@bsphack%
    \csname phantomsection\endcsname
    \def\tst{#1}%
    \def\labelmarkup{\textcolor{link}}
    \def\refmarkup{}%
    \ifx\tst\empty\def\@currentlabel{\refmarkup{#2}}{\label{#3}}%
    \else\def\@currentlabel{\refmarkup{#1}}{\label{#3}}\fi%
    \@esphack%
    \labelmarkup{#2}
}
\makeatother

\usepackage{microtype}
\usepackage[normalem]{ulem}
\usepackage{stmaryrd}

\usepackage{cases}


\newcommand{\N}{\mathbb{N}}

\usepackage{bbm}


\newcommand{\F}{\mathcal{F}}

\newcommand{\X}{\mathcal{X}}























\usepackage{subfig}

\usepackage[numbers,longnamesfirst]{natbib}  

\usepackage[nameinlink,capitalise]{cleveref}

\crefname{theorem}{Theorem}{Theorems}
\crefname{thm}{Theorem}{Theorems}
\crefname{lem}{Lemma}{Lemmas}
\crefname{remark}{Remark}{Remarks}
\crefname{claim}{Claim}{Claims}
\crefname{conj}{Conjecture}{Conjectures}
\crefname{prop}{Proposition}{Propositions}
\crefname{defn}{Definition}{Definitions}
\crefname{cor}{Corollary}{Corollaries}
\crefname{figure}{Figure}{Figures}

\newcounter{algo}
\crefname{algo}{Algorithm}{Algorithms}
\newcommand{\algohead}[1]{%
    \refstepcounter{algo}
    \paragraph{\textcolor{red!60!black}{Algorithm \thealgo.}}
    \label{#1}
}

\newcommand*\circled[1]{\tikz[baseline=(char.base)]{
            \node[shape=circle,draw,inner sep=1pt] (char) {#1};}}


\newcommand{\Height}{\mathsf{H}} 
\newcommand{\Ht}{\mathsf{h}}

\newcommand{\Xb}{\mathrm{\mathbf{x}}} 
\newcommand{\Xbh}{\hat{\Xb}} 
\newcommand{\Fh}{\hat{\F}} 
\renewcommand{\X}{\mathrm x} 

\newcommand{\Yb}{\mathrm{\mathbf{y}}}

\renewcommand{\Pr}[1]{\mathbb{P}\left(#1\right)}
\newcommand{\Es}[1]{\mathbb{E}\left[#1\right]}

\newcommand{\ed}{\stackrel{d}{=}}

\newcommand{\intervalle}[4]{\mathopen{#1}#2
                            \mathclose{}\mathpunct{},#3
                            \mathclose{#4}}
\newcommand{\iinterval}[2]{\intervalle\llbracket{#1}{#2}
                               \rrbracket}

\newcommand{\T}{\mathcal{T}} 
\newcommand{\leqst}{\preceq_{\mathrm{st}}}
\newcommand{\geqst}{\succeq_{\mathrm{st}}}

\usepackage{tikz}
\usetikzlibrary{positioning, fit,backgrounds,arrows,shapes,decorations.pathreplacing}

\title{Does freezing impede the growth of random recursive trees?}

\author{
\qquad  
Anna Brandenberger
\thanks{Department of Mathematics, MIT, \textsf{abrande@mit.edu}
}
 \qquad  
Simon Briend
\thanks{Department of Mathematics and Computer Science, Unidistance Suisse, \textsf{simon.briend@unidistance.ch}
} 
\qquad
Hannah Cairns
\thanks{Department of Mathematics and Statistics, McGill University,
\textsf{hannah.cairns@mcgill.ca}
}  \\  
 Robin Khanfir
\thanks{Department of Mathematics and Statistics, McGill University, \textsf{robin.khanfir@mcgill.ca}
} \qquad
Igor Kortchemski 
\thanks{CNRS \& DMA, École normale supérieure, PSL University, 75005 Paris, France, \textsf{igor.kortchemski@math.cnrs.fr}
}
}
\date{}

\begin{document}

\maketitle

\begin{abstract}
Uniform attachment with freezing is an extension of the classical model of random recursive trees, in which trees are recursively built by attaching new vertices
to old ones. In the model of uniform attachment with freezing, vertices are allowed to freeze, in the sense that new vertices cannot be
attached to already frozen ones. We study the impact of removing attachment and/or freezing steps on the height of the trees. We show in particular that removing an attachment step can increase the expected height, and that freezing cannot substantially decrease the height of random recursive trees. Our methods are based on coupling arguments.
\end{abstract}

\section{Introduction}

We are interested in the height of random recursive trees built using  uniform attachment with freezing, which is a model recently introduced by \citet{BBKK23+}. The motivation of the model stems from contact tracing in SIR epidemic models. More generally, the impact of freezing on dynamically-built random graph models is a natural mathematical question.
The work of \citet{BBKK25+}
 studies scaling limits
in the specific regime where the number of non-frozen vertices roughly evolves as the total number of
vertices to a given power. Based on results of \citet{BBKK23+},  \citet{KKS25+} completes the study of the height of the infection tree of a SIR epidemic on the complete graph using uniform attachment with freezing.

This model is defined as follows. Given a \emph{choice sequence}
$\Xb=(\X_i)_{1 \leq i \leq m} \in \{-1,+1\}^m$, we set $S_{0}(\Xb) \coloneqq1$ and for every $j \in \iinterval{1}{m}$, let
\begin{equation}
\label{eq:Sj}S_j(\Xb) \coloneqq 1+ \sum_{i=1}^j \X_i\, ; \quad \text{ and } \quad \tau(\Xb) \coloneqq \min \{j \in \iinterval{1}{m} : S_{j}(\Xb)=0\}
\end{equation}
with the convention $\min \varnothing = \infty$. 
Starting from a unique active vertex, defined to be the root, we recursively build random rooted trees by reading the elements of the sequence one after the other, by applying a ``uniform freezing'' step when reading $-1$ (which amounts to freezing an active vertex chosen uniformly at random) and a ``uniform attachment'' step when reading $+1$ (which amounts to attaching a new vertex to an active vertex chosen uniformly at random). Note that $\tau(\Xb)$, if finite, represents the first time at which all vertices in the tree are frozen. For every $j \in \iinterval{1}{m}$ we denote by $\T_j(\Xb)$ the random rooted tree  built in this fashion after reading the first $j$ elements of $\Xb$ (see \cref{sec:background} for a precise definition) and let $\T(\Xb)=\T_m(\Xb)$ be the last tree. We denote by $\Height(\T(\Xb))$ the height of $\T(\Xb)$ and $\Ht(v)$ the height of a vertex $v \in \T(\Xb)$. 

Finally, we define
\[
\mathbb{X}_n= \bigcup_{m \geq n} \{\Xb= (\X_i)_{1 \leq i \leq m} \in \{-1,+1\}^m : \# \{ i \in \iinterval{1}{m} : \X_i=+1\}=n \textrm{ and } \tau(\Xb) \geq m   \},
\]
so that trees of the form $\mathcal{T}(\Xb)$ with $\Xb \in \mathbb{X}_n$ are all possible $n$-edge trees built by uniform attachment with freezing. To simplify notation, we write $(+1)^m=(+1,+1,\ldots,+1)$ where $+1$ appears $m$ times and similarly define $(-1)^m$. Also, we write $\Xb \mathrm{\mathbf{y}}$ for the concatenation of $\mathrm{\mathbf{x}}$ and $\mathrm{\mathbf{y}}$.

In this note, we address a question of \citet{BBKK23+} on whether $\Height(\T(\Xb))$ always stochastically dominates the height of the uniform attachment tree, also known as the random recursive tree. 
Our main result (\cref{thm:main} below) states that, roughly speaking, the asymptotic order of magnitude of $\Height(\T(\Xb))$ for any $\Xb \in \mathbb{X}_{n}$ is at least $e \log (n)$ with high probability, which is the order of magnitude of the height of a random recursive tree $\mathcal{R}_n$ with $n$ edges. This can informally be rephrased by saying that freezing random trees doesn't asymptotically shrink them. 

At first glance this may seem quite obvious in light of the results of \citet{BBKK25+}: there it is shown (under some technical assumptions) that if the number of active vertices evolves as $n^\alpha$, where $n$ is the total number of vertices and $\alpha \in(0,1)$, then the height of the tree is of order $n^{1-\alpha}$, thus indicating that adding more freezing tends to increase the height. However this is specific to this regime, and not true in general. Indeed, it is \emph{not} true that the height of random recursive tree with $n$ edges is stochastically dominated by $\Height(\T(\Xb))$ for any $\Xb \in \mathbb{X}_{n}$ (see \cref{sec:RRT_vs_alternate}). More generally, there is no obvious coupling that would exhibit a stochastic ordering when modifying the choice sequence. Indeed, the expected height of the tree can increase under both removal of a freezing step or removal of an attachment step, as emphasized in the following result.

\begin{prop}
\label{prop:increaseheight}
The following assertions hold.
\begin{enumerate}
\item[(i)] Removing a freezing step immediately followed by an attachment step can increase the expected height of the tree. More precisely, for every $A>0$, there exist $n,m \in \N$ such that for
\begin{equation}\label{eq:remove-freezing}
    \Xb= (+1)^m (-1)^{m-1} (-1,+1)(+1)^n \qquad \textrm{and} \qquad \hat{\Xb}= (+1)^m (-1)^{m-1} (+1)^n,
\end{equation}
we have $ \Es{\Height(\T(\hat{\Xb}))} \geq \Es{\Height(\T(\Xb))}+A$. 

\quad In particular, if $\bar{\Xb}=(+1)^m (-1)^{m-1} (+1)^{n+1}$ then we have $ \Es{\Height(\T(\bar{\Xb}))} \geq \Es{\Height(\T(\Xb))}+A$, so removing a freezing step can increase the expected height of the tree.
\item[(ii)] Removing an attachment step immediately followed by a freezing step can increase the expected height of the tree. More precisely, for every $A>0$, there exist $n,m \in \N$ such that for
\begin{equation}\label{eq:remove-notfirst}
    \Xb= (+1)^m (-1)^{m-1} (+1,-1)(+1)^n \qquad \textrm{and} \qquad \hat{\Xb}= (+1)^m (-1)^{m-1} (+1)^n,
\end{equation}
we have $ \Es{\Height(\T(\hat{\Xb}))} \geq \Es{\Height(\T(\Xb))}+A$.
\item[(iii)] Removing an attachment step can increase the expected height of the tree. More precisely, there exists $n \in \N$ such that for
\begin{equation}\label{eq:remove-attach}
\Xb=(+1,+1,-1) (+1)^n\qquad \textrm{and} \qquad \hat{\Xb}=(+1,-1) (+1)^n,
\end{equation}
we have $ \Es{\Height(\T(\hat{\Xb}))} >\Es{\Height(\T(\Xb))}$. 
\end{enumerate}
\end{prop}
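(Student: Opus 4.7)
My plan is to handle all three parts via a common decomposition: split each construction into a short initial phase producing a small random tree with a few active vertices, and a final phase in which the remaining attachment steps build a (multi-)rooted uniform attachment forest on top of those active vertices. Each claim then reduces to comparing two expected heights using asymptotics for heights of $k$-rooted uniform attachment trees.

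For part~(iii), I would first compute $\Es{\Height(\T(\hat{\Xb}))}$ exactly. A case analysis on whether the root or $c_1$ is frozen after the first two steps gives, for $n \geq 1$,
\[
\Es{\Height(\T(\hat{\Xb}))} = \Es{\Height(\Rc_n)} + \tfrac{1}{2}.
\]
For $\T(\Xb)$ I would enumerate the six equiprobable configurations (two shapes of the three-vertex initial tree, three choices of frozen vertex). In each, the remaining $n$ attachments produce a 2-rooted uniform attachment forest with $n$ additional vertices, call it $\F^{(2)}_n$, grown from the two active vertices. Writing $(d_1,d_2)$ for the depths of the active vertices and $(H_1,H_2)$ for the heights of their subtrees in $\F^{(2)}_n$, the final tree has height at most $\max(d_1,d_2) + \max(H_1,H_2)$ (the frozen vertex contributes depth at most $2$, negligible for large $n$). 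Averaging $\max(d_1,d_2)$ over the six sub-cases yields $\tfrac{4}{3}$, whence
\[
\Es{\Height(\T(\Xb))} \leq \Es{\Height(\F^{(2)}_n)} + \tfrac{4}{3}.
\]
Combining these with the asymptotic $\Es{\Height(\Rc_n)} - \Es{\Height(\F^{(2)}_n)} \to e \log 2$ as $n \to \infty$ (via Yule-process coupling: the height of a $k$-rooted uniform attachment tree with $n$ additional vertices expands as $e\log(n/k) - \tfrac{3}{2}\log\log(n/k) + O(1)$, with the log-log corrections for $k = 1,2$ cancelling) and the arithmetic inequality $e \log 2 > \tfrac{5}{6}$ yields the strict inequality for all $n$ sufficiently large.

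For parts~(i) and~(ii), after the shared prefix $(+1)^m(-1)^{m-1}$ the tree is $\Rc_m$ with a uniformly chosen pair of active vertices of depths $(d_1, d_2)$. In $\hat{\Xb}$ the remaining $n$ attachments grow $\F^{(2)}_n$ from this pair. In $\Xb$ the inserted subsequence replaces the pair by a \emph{coupled} one: in~(i), a surviving vertex together with its newly attached child of depths $(d, d+1)$; in~(ii), by a short case analysis, a mixture of similar coupled configurations. Choosing $n$ in the regime $m \ll n \ll e^m$ ensures the final height is dominated by the growth-phase contribution rather than the intrinsic height of $\Rc_m$. The resulting gap in expected heights is then governed by $\Es{\max(d_1,d_2)} - \Es{d_1}$, which is of order $\sqrt{\log m}$ because the depth of a uniformly random vertex in $\Rc_m$ is approximately Gaussian with mean and variance both of order $\log m$. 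Choosing $m$ large enough makes the gap exceed any prescribed $A$; the ``in particular'' statement of~(i) follows from the trivial monotonicity $\Es{\Height(\T(\bar{\Xb}))} \geq \Es{\Height(\T(\hat{\Xb}))}$ (one extra attachment step can only weakly increase the height under the obvious coupling).

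The main obstacle is controlling sub-leading terms. In~(iii) the margin $e\log 2 - \tfrac{5}{6} \approx 1.05$ is only $O(1)$, so one needs the second-order Pittel-type expansion of the height of $k$-rooted uniform attachment trees, via Yule-process or branching-random-walk concentration, to ensure that the limit of the difference is genuinely close to $e\log 2$ (and in particular exceeds $\tfrac{5}{6}$ from some $n_0$ onward). In~(i)--(ii) the divergent signal $\sqrt{\log m}$ leaves much more room, but one must still show that the growth-phase contributions cancel between $\hat{\Xb}$ and $\Xb$ up to $o(\sqrt{\log m})$ error, again via concentration estimates on $\Height(\F^{(2)}_n)$ and on the depth distribution of $\Rc_m$.
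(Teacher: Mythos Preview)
Your outline for parts~(i) and~(ii) is essentially the paper's argument: couple the two constructions, reduce the gap to $\Es{\max(d_1,d_2)}-\Es{d_1}=\tfrac12\Es{|d_1-d_2|}$ up to $O(1)$ errors coming from $|H_1-H_2|$, and use the Gaussian fluctuations of the depth of a uniform vertex in $\Rc_m$ to make this blow up. The paper carries out exactly this computation (with $m=\lfloor n^{1/9}\rfloor$), controlling the $O(1)$ errors via concentration of $\Height(\Rc_k)$ and tightness of $\Height(\Rc_{I_n})-\Height(\Rc_{n-I_n})$; your sketch is correct but would need those details filled in.

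Your approach to part~(iii), however, rests on a false asymptotic. You claim $\Es{\Height(\Rc_n)}-\Es{\Height(\F^{(2)}_n)}\to e\log 2$, based on the heuristic that each of the two subtrees has $\sim n/2$ vertices. But in a P\'olya urn started at $(1,1)$ the limiting fraction is \emph{uniform} on $(0,1)$, not $\tfrac12$; more to the point, grafting one root of $\F^{(2)}_n$ onto the other produces exactly $\Rc_{n+1}$, so $\Height(\Rc_{n+1})-\Height(\F^{(2)}_n)\in\{0,1\}$ always. Hence $\Es{\Height(\Rc_n)}-\Es{\Height(\F^{(2)}_n)}$ is confined to $[0,1]+o(1)$, never near $e\log 2\approx 1.88$. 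With the correct bound, your inequality yields only $\Es{\Height(\T(\hat\Xb))}-\Es{\Height(\T(\Xb))}\ge c-\tfrac56+o(1)$ for some $c\in[\tfrac12,1]$, which is inconclusive unless you can pin down $c>\tfrac56$. Your crude upper bound $\max(d_1+H_1,d_2+H_2)\le\max(d_1,d_2)+\max(H_1,H_2)$ throws away exactly the correlation needed to close this gap.

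The paper instead computes $\Es{\Height(\T(\Xb))}$ essentially \emph{exactly}: writing the six cases in terms of the two subtrees $(T^1_{n+1},T^2_{n+1})$ of $\Rc_{n+1}$ obtained by deleting the first edge, it gets
\[
\Es{\Height(\T(\Xb))}=\tfrac13+\Es{\Height(\Rc_{n+1})}+\tfrac16\Pr{\Height(T^1_{n+1})=\Height(T^2_{n+1})+1}+o(1),
\]
bounds the probability by $\tfrac12$ via exchangeability, and uses $\Es{\Height(\Rc_{n+1})}-\Es{\Height(\Rc_n)}\to 0$ (which it proves by showing the height of a uniform vertex is rarely maximal) to obtain a margin of $\tfrac12-\tfrac13-\tfrac1{12}=\tfrac1{12}$. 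To repair your argument you would need this sharper, coupling-based analysis rather than the lossy upper bound.
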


Despite the lack of stochastic domination under the above two natural operations on the choice sequence, we are able to show that for any $\Xb \in \mathbb{X}^n$, $\Height(\T(\Xb))$ is asymptotically at least of the same order as $\Height(\mathcal R_n)$, so that adding freezing cannot substantially decrease the height of random recursive trees.

For random recursive trees, it is well known (see~\citet[Corollary 1.3]{AF13} and~\citet[Eq.~(1.3)]{PS22}) that  as $n \rightarrow \infty$,
\begin{equation}
\label{eq:H}
\Height(\mathcal{R}_n) = e \log n - \frac{3}{2} \log \log n +O_{\mathbb{P}}(1), \qquad
\Es{\Height(\mathcal{R}_n)} = e \log n - \frac{3}{2} \log \log n +O(1).
\end{equation} 
The following statement formalizes our claim that the asymptotic lower bound for $\Height(\T(\Xb))$ matches $\Height(\mathcal{R}_n)$ to leading order.

\begin{thm}
\label{thm:main}
We have
\[\min_{\Xb \in \mathbb{X}_n}\Pr{\Height(\T(\Xb)) \geq e \log(n) - 5 \log \log n}  \rightarrow 1. 
\]
as $n \to \infty$. Also, for every $n$ sufficiently large,
$$
\min_{\Xb \in \mathbb{X}_n}\Es{\Height(\T(\Xb))}  \geq  e \log n -5 \log \log n.
$$
\end{thm}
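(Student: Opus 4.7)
The plan is to first establish the high-probability statement uniformly over $\Xb \in \mathbb{X}_n$ and then derive the expectation bound from it. If we prove $\Pr{\Height(\T(\Xb)) \geq e\log n - (5-\varepsilon)\log\log n} \to 1$ uniformly in $\Xb$ for some fixed $\varepsilon>0$, then for $n$ sufficiently large $\Es{\Height(\T(\Xb))} \geq e\log n - 5\log\log n$, by the trivial bound $\Es{\Height(\T(\Xb))} \geq k \cdot \Pr{\Height(\T(\Xb)) \geq k}$, the multiplicative loss being absorbed into the $\log\log n$ slack.

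The core of the argument is to lower-bound $\Height(\T(\Xb))$ by the length of a random descending lineage built by the dynamics. Starting from $v_0 = \mathrm{root}$, given that $v_i$ is an active vertex introduced at time $t_i$, I let $v_{i+1}$ be the first vertex attached as a child of $v_i$ during the remainder of the process (if such exists before $v_i$ is frozen); otherwise the lineage terminates. At each step $t > t_i$, conditionally on $v_i$ still being active, the uniform rule selects $v_i$ with probability $1/S_{t-1}$, and the outcome (extend vs.\ freeze) is dictated by $\X_t$. Because a single lineage may be fragile against adversarial $\Xb$, I would strengthen the construction by branching: treat every newly attached child of $v_i$ as a candidate $v_{i+1}$, producing a tree of candidate lineages embedded in $\T(\Xb)$ whose maximum depth lower-bounds $\Height(\T(\Xb))$.

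The key technical lemma is that, uniformly over $\Xb \in \mathbb{X}_n$, the depth of this candidate tree reaches $e\log n - 5\log\log n$ with high probability. Its proof should represent the depth as a branching-random-walk-type quantity and apply classical first/second-moment (or many-to-one) arguments, while using the ballot constraint $\tau(\Xb) \geq m$ to control the density of $-1$'s uniformly. The hard part will be precisely this uniformity: adversarial sequences in which long runs of $-1$'s cluster just after certain $t_i$ bias the extension-versus-freezing race toward termination, and a careful coupling reducing the problem to a uniform statement about weighted Bernoulli races on ballot-constrained sequences is needed to push the bound through. The extra $5\log\log n$ slack compared with the $\tfrac{3}{2}\log\log n$ in \eqref{eq:H} for the pure random recursive tree reflects the cost of this adversarial control.
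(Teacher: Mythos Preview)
Your proposal has a genuine gap at its core. When you ``branch'' by treating every newly attached child of $v_i$ as a candidate $v_{i+1}$, note that in this model a child can only be attached to $v_i$ while $v_i$ is active; hence the candidate tree you describe is exactly $\T(\Xb)$ itself, and your ``key technical lemma'' is the theorem restated. You acknowledge that the uniformity over adversarial $\Xb$ is ``the hard part'' and that ``a careful coupling\ldots is needed,'' but no such coupling is supplied---and that is precisely where the content of the proof must live. A secondary issue: deriving the expectation bound via $\Es{\Height}\geq k\,\Pr{\Height\geq k}$ costs a multiplicative $1-o(1)$, hence an additive $o(\log n)$, which swamps the $\varepsilon\log\log n$ slack you allotted; you would need a quantitative rate on the probability, not merely convergence to $1$.

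The paper bypasses any uniform branching-random-walk analysis by a case split on $M(\Xb)=\max_j S_j(\Xb)$ with threshold $r_n=\lfloor n/(3\log n)\rfloor$. If $M(\Xb)<r_n$, the height of a single uniform active vertex is a sum of $n$ independent Bernoullis each with parameter at least $1/r_n$, so its mean already exceeds $3\log n>e\log n$ and Bennett's inequality gives concentration. If $M(\Xb)\geq r_n$, one iteratively deletes the earliest consecutive $(+1,-1)$ pair until the sequence begins with $r_n$ plus-ones; Proposition~\ref{prop:decreaseheight}, proved via the time-reversed growth--coalescent construction of Algorithm~\ref{algo2}, shows each deletion can only stochastically decrease the height, whence $\Height(\T(\Xb))\geqst\Height(\mathcal R_{r_n})$ and \eqref{eq:H} finishes. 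The coupling behind Proposition~\ref{prop:decreaseheight} is the engine of the argument, and nothing in your outline substitutes for it.
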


The key to proving the result is the following. Recall that the random recursive tree with $n$ edges corresponds to the sequence $\Xb = (+1)^n$ without any freezing steps, so we can write $\mathcal{R}_n = \T((+1)^n)$. We make use of the following coupling which 
\emph{does} stochastically decrease the height of the tree.

\begin{thm}
\label{prop:decreaseheight}
 Let $\Xb= (\X_i)_{1 \leq i \leq m} \in \{-1,+1\}^m$ such that $S_{j}(\Xb)>0$ for every $j \in \iinterval{1}{m-1}$. Suppose there exists $k \in \iinterval{1}{m-1}$ such that $\X_1=\X_2=\cdots=\X_{k}=+1$ and $\X_{k+1}=-1$. Let $\hat{\Xb} = (\X_1, \dots, \X_{k-1}, \X_{k+2}, \dots, \X_m)$ be the sequence obtained from $\Xb$ by removing $\X_k$ and $\X_{k+1}$. Then
\[ \Height(\T(\hat{\Xb})) \leqst \Height(\T(\Xb)).\]
\end{thm}

We establish this stochastic domination via a coupling between the construction of the two random trees, which is relatively general and may be of independent interest.
Using it 
recursively allows us to compare the heights of a tree with freezing and of a random recursive tree.
The proof of \cref{prop:decreaseheight} is based on an alternative construction of uniform attachment trees with freezing of \citet{BBKK23+}, based on time-reversal, through
a growth-coalescent process of rooted forests.

\paragraph{Outline.} \cref{sec:examples} studies the influence of local modifications of certain choice sequences on the height, and proves \cref{prop:increaseheight}. In \cref{sec:3}, we first show that removing the first attachment step followed by a freezing step stochastically decreases the height (\cref{prop:decreaseheight}) by using an alternative construction of uniform attachment trees with freezing, based on time-reversal, through
a growth-coalescent process of rooted forests  (\cref{sec:3.1,sec:3.2}). The proof of \cref{thm:main} is then given in \cref{sec:3.3}. \cref{sec:open} concludes the paper with some open questions.

\paragraph{Acknowledgements.} This research was initiated during the Eighteenth Annual Workshop on Probability and Combinatorics at McGill University’s Bellairs Institute in Holetown, Barbados. We are grateful to Bellairs Institute for its hospitality. This material is also based upon work supported by the National Science Foundation under Grant No. DMS-1928930, while some of the authors were in
residence at the Simons Laufer Mathematical Sciences Institute in
Berkeley, California, during the Spring 2025 semester.
AB was supported by NSF GRFP 2141064 and Simons Investigator Award 622132 to Elchanan Mossel. 
RK has been supported by the Natural Sciences and Engineering Research Council of Canada (NSERC) via a Banting postdoctoral fellowship [BPF-198443].

\section{Examples and counter-examples}
\label{sec:examples}

\subsection{The recursive construction of uniform attachment with freezing}
\label{sec:background}

Consider a choice sequence $\Xb= (\X_i)_{1 \leq i \leq m} \in \{-1,+1\}^m$ such that $S_{j}(\Xb)>0$ for every $j \in \iinterval{1}{m-1}$. We now formally construct the random trees $(\T_j(\Xb))_{0 \leq j \leq m}$. These trees will be rooted and vertex-labelled by either ``$a$'' (if the vertex is active) or ``$f$'' (if the vertex is frozen).

\algohead{algo1}
\begin{compitem}
\item Start with the tree $\T_0(\Xb)$ made of a single root vertex labelled $a$.
\item For every  $1 \leq j \leq m$, let $V_j$ be a random uniform active vertex of $\T_{j-1}(\Xb)$, chosen independently from all previous ones. Then,
\begin{compitem}
\item[--] if $\X_j=-1$, build $\T_j(\Xb)$ from $\T_{j-1}(\Xb)$ simply by replacing the label $a$ of $V_j$ with label $f$;
\item[--] if $\X_j=1$, build $\T_j(\Xb)$ from $\T_{j-1}(\Xb)$ by adding an edge  between $V_j$ and a new vertex labelled $a$. 
\end{compitem}
\end{compitem}
We let  $\T(\Xb)=\T_m(\Xb)$ be the last tree.  Observe that for $0 \leq j \leq m$, $S_j(\Xb)$ represents the number of active vertices of $\T_{j}(\Xb)$.

\subsection{The height of the frozen tree does not stochastically dominate the height of the random recursive tree}
\label{sec:RRT_vs_alternate}

Here we make the observation that it is not true in general that adding freezing to a random recursive tree stochastically increases the height.  Recall that we write $\mathcal{R}_n=\T((+1)^n)$, where $(+1)^n\in\mathbb{X}_n$ is the sequence of length $n$ without any freezing steps. We also write $\mathcal{A}_n=\T((+1,-1)^n))$, where $(+1,-1)^n \in \mathbb{X}_n$ is the alternating sequence $(+1,-1,+1,-1,\ldots,+1,-1)$ of length $2n$.

\begin{obs}
For every $n \geq 3$ we have $\Height(\mathcal{A}_n)\not\geqst \Height(\mathcal{R}_n)$. 
\end{obs}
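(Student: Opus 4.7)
My plan is to refute stochastic domination by identifying a threshold $h$ at which the probabilities are reversed, and $h=2$ will suffice. I exploit the very rigid structure imposed by the alternating sequence to compute the distribution of $\Height(\mathcal{A}_n)$ essentially in closed form.

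The key observation is that after each freezing step — equivalently, just before each attachment step from the second onwards — the tree has exactly one active vertex, so the attachment itself is deterministic. All randomness in $\mathcal{A}_n$ is therefore concentrated in the freezing steps, each of which picks uniformly one of the two vertices that are active immediately after the preceding attachment. Let $v_k$ denote the vertex added at the $k$-th attachment, and let $a_k$ denote the unique active vertex just before the $k$-th attachment, so $a_1$ is the root. Then $\Ht(v_k)=\Ht(a_k)+1$, while the freezing step following the $k$-th attachment sets $a_{k+1}$ equal to either $a_k$ or $v_k$ with equal probability, independently across $k$. Since $(\Ht(a_k))_{1\le k\le n}$ is thus non-decreasing with increments in $\{0,1\}$, one finds
\[
\Height(\mathcal{A}_n) \;=\; \Ht(v_n) \;=\; 1+Y_1+\cdots+Y_{n-1},
\]
where $Y_1,\dots,Y_{n-1}$ are i.i.d.\ $\Ber(1/2)$, i.e., $\Height(\mathcal{A}_n)$ is distributed as $1+\Bin(n-1,1/2)$. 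In particular, $\Pr{\Height(\mathcal{A}_n)=1}=2^{-(n-1)}$.

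For the random recursive tree, the event $\{\Height(\mathcal{R}_n)=1\}$ means every new vertex attaches to the root, with $\Pr{\Height(\mathcal{R}_n)=1}=\prod_{k=2}^{n}\frac{1}{k}=\frac{1}{n!}$. A short induction shows that $n!>2^{n-1}$ for all $n\ge 3$, so for such $n$,
\[
\Pr{\Height(\mathcal{A}_n) \ge 2} \;=\; 1-2^{-(n-1)} \;<\; 1-\frac{1}{n!} \;=\; \Pr{\Height(\mathcal{R}_n) \ge 2},
\]
which contradicts $\Height(\mathcal{A}_n)\geqst\Height(\mathcal{R}_n)$.

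I do not anticipate any substantial obstacle; the only delicate point is the structural identification of $\Height(\mathcal{A}_n)$, and this follows directly from the fact that freezing is the sole source of randomness in the alternating regime.
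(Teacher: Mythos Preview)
Your proof is correct and follows the same route as the paper: both compare $\Pr{\Height(\mathcal{A}_n)=1}=2^{-(n-1)}$ with $\Pr{\Height(\mathcal{R}_n)=1}=1/n!$ to violate the domination inequality at threshold $h=2$. Your additional identification of the full law $\Height(\mathcal{A}_n)\ed 1+\Bin(n-1,1/2)$ is more than is needed here, but it is correct and cleanly justifies the value $2^{-(n-1)}$ that the paper simply asserts.
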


This simply comes from the fact that $\Pr{\Height(\mathcal{R}_n) = 1 } < \Pr{\Height(\mathcal{A}_n) = 1} $. 
Indeed, we clearly have $\Pr{\Height(\mathcal{R}_n) =1} = {1}/ n!$ and  $\Pr{\Height(\mathcal{A}_n )= 1}= {1}/{2^{n-1}}$.

It is interesting to note that although the probability $\Pr{\Height(\mathcal{R}_n) =1} $ is asymptotically much smaller than $\Pr{\Height(\mathcal{A}_n) = 1}$, the random variable $\Height(\mathcal{A}_n)$ is typically much larger than $\Height(\mathcal{R}_n) $. Indeed,
\[
\frac{1}{n}  \Height(\mathcal{A}_n) \rightarrow
\frac{1}{2} 
\qquad \textrm{and} \qquad  \frac{1}{\log n}  \Height(\mathcal{R}_n) \rightarrow
e
\]
in probability as $n \to \infty$, respectively by \citet[Theorem 3 (3)]{BBKK23+} and \eqref{eq:H}.

\subsection{Removing a freezing step  immediately followed by an attachment step can increase the expected height}
\label{subseq:remov-1}

Here we establish \cref{prop:increaseheight}(i). Recall that we take
\begin{equation}\label{eq:Xb-and-Xb-hat-i}
    \Xb= (+1)^m (-1)^{m-1} (-1,+1)(+1)^n \qquad \textrm{and} \qquad \hat{\Xb}= (+1)^m (-1)^{m-1} (+1)^n,    
\end{equation}
    so that $\Xbh$ is $\Xb$ with a consecutive freezing and attachment step removed. 

    We first need some technical estimates.  Let $(\mathcal{R}_{i})_{i \geq 0}$, $(\mathcal{R}^{1}_{i})_{i\geq 0}$ and $(\mathcal{R}_{i}^{2})_{i\geq 0}$ be three independent  random recursive trees without freezing (independent from all the other random variables), indexed in such a way that $ \mathcal{R}_{i}$, $ \mathcal{R}^{1}_{i}$ and $ \mathcal{R}^{2}_{i}$ have $i$ edges.  Finally, let $I_n$ be an independent random variable uniformly distributed on $\{0,\ldots,n\}$.

\begin{lem}
\label{lem:estimates}
There exist two constants $c_1,c_2>0$ such that the following assertions hold:
\begin{enumerate}
\item[(i)] for $n$ sufficiently large,  $\Pr{\Height(\mathcal{R}_{\lfloor n^{1/9} \rfloor})\geq \Height(\mathcal{R}^{1}_{\lfloor n/2\rfloor})}  \leq  2 c_1 n^{-\frac{1}{8}}$;
\item[(ii)] 
for every $n,k \geq 1$, $ \Pr{ |\Height(\mathcal{R}^{2}_{I_n})-\Height(\mathcal{R}^1_{n-I_n})| \geq k } \leq 4 c_1 e^{- c_2 k /4}$.
\end{enumerate}

\end{lem}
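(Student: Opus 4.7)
The plan is to base both parts on a subexponential concentration inequality for the height of a random recursive tree around its mean: there exist constants $C_0, c_0 > 0$ such that for all $n \geq 1$ and all $t \geq 0$,
\[
\Pr{|\Height(\mathcal{R}_n) - \mu_n| \geq t} \leq C_0 e^{-c_0 t}, \qquad \text{where } \mu_n \ceq \Es{\Height(\mathcal{R}_n)}.
\]
Such a bound can be extracted from the literature on random recursive trees (e.g.\ via the branching-random-walk embedding used in \cite{PS22}, or by classical Chernoff-type arguments on the depth profile). By \eqref{eq:H} one has $\mu_n = e\log n + O(\log\log n)$, so tails around $\mu_n$ translate into tails around $e\log n$ up to absorbed $\log\log n$ terms.

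For (i), set $a = \lfloor n^{1/9}\rfloor$ and $b = \lfloor n/2 \rfloor$, so that $\mu_b - \mu_a = \tfrac{8e}{9}\log n + O(\log\log n)$. Choose a threshold $T$ halfway between $\mu_a$ and $\mu_b$, and split by a union bound:
\[
\Pr{\Height(\mathcal{R}_a) \geq \Height(\mathcal{R}^1_b)} \leq \Pr{\Height(\mathcal{R}_a) \geq T} + \Pr{\Height(\mathcal{R}^1_b) < T}.
\]
Each term is bounded by $C_0 e^{-c_0(\mu_b - \mu_a)/2} = C_0 \, n^{-\,4ec_0/9 + o(1)}$ via the concentration inequality. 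If the tail constant $c_0$ (sharpened if needed using the known genuinely polynomial decay of the upper tail of $\Height(\mathcal{R}_n)$ at $\lambda \log n$ for $\lambda > e$) satisfies $4ec_0/9 \geq 1/8$, this dominates $n^{-1/8}$ for $n$ large.

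For (ii), condition on $I_n = i$ and apply the triangle inequality
\[
|\Height(\mathcal{R}^2_i) - \Height(\mathcal{R}^1_{n-i})| \leq |\Height(\mathcal{R}^2_i) - \mu_i| + |\mu_i - \mu_{n-i}| + |\Height(\mathcal{R}^1_{n-i}) - \mu_{n-i}|.
\]
If the left-hand side exceeds $k$, one of the three summands is at least $k/3$. The concentration bound controls the first and third summands by $C_0 e^{-c_0 k/3}$, uniformly in $i$. The middle summand is deterministic in $i$: since $\mu_i - \mu_{n-i} = e\log(i/(n-i)) + O(\log\log n)$, the condition $|\mu_i - \mu_{n-i}| \geq k/3$ forces $i/(n-i)$ to lie outside an interval of the form $[e^{-k/(3e)+o(1)}, e^{k/(3e)+o(1)}]$, and a direct count shows that the proportion of such $i \in \{0,\ldots,n\}$ is $O(e^{-k/(3e)})$. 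Averaging over the uniform law of $I_n$ yields a bound $C' e^{-c' k}$; the constants can be reorganized into the target form $4c e^{-ck/4}$ (with $4c e^{-ck/4} \geq 1$ trivially handling small $k$). Boundary cases such as $i \in \{0, n\}$, where one tree is empty and $\mu_0 = 0$, carry mass $O(1/n)$ and are absorbed into the constants.

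The main obstacle is really the bookkeeping: the lemma uses a single constant $c$ both as a prefactor and inside an exponent, shared across (i) and (ii). The tail-bound constant must therefore be strong enough to simultaneously give a polynomial exponent $\geq 1/8$ in (i) and an exponential rate compatible with $c/4$ in (ii); once such a concentration inequality is in hand, the rest is elementary.
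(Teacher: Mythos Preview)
Your approach is essentially the paper's: both parts rest on the subexponential concentration bound for $\Height(\mathcal R_n)$ around its mean, then a union bound at an intermediate threshold for (i) and a triangle-inequality split for (ii). Two points where the paper is tighter than your write-up are worth noting.

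First, the paper does not leave the tail constant unspecified: it quotes \cite[Corollary~1.3]{AF13} in the form $\Pr{|\Height(\mathcal R_n)-\mu_n|\geq k}\leq c\,e^{-k/(3e)}$, i.e.\ $c_0=1/(3e)$. With this value your condition $4ec_0/9\geq 1/8$ is satisfied (in fact $4ec_0/9=4/27>1/8$), so your hedging about ``sharpening if needed'' is unnecessary. The paper actually places the threshold at $\tfrac{e}{2}\log n$ rather than the midpoint, getting a deviation of $\tfrac{3e}{8}\log n$ on each side and hence exactly $c\,n^{-1/8}$ per term; your midpoint split gives a slightly better exponent, which is fine.

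Second, for (ii) the paper routes the deterministic comparison through $\mu_n$, using a four-term split
\[
|\Height(\mathcal R^2_{I_n})-\Height(\mathcal R^1_{n-I_n})|\leq |H^2_{I_n}-\mu_{I_n}|+|H^1_{n-I_n}-\mu_{n-I_n}|+|\mu_{I_n}-\mu_n|+|\mu_{n-I_n}-\mu_n|,
\]
together with the uniform estimate $|\mu_n-\mu_i|\leq -C\log\tfrac{1+i}{1+n}+C$ (this absorbs the $\log\log$ correction into the logarithmic main term, since $\log\log n-\log\log i$ is $O(1)$ once $i\geq\sqrt n$ and is dominated by $\log(n/i)$ otherwise). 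Your three-term split compares $\mu_i$ and $\mu_{n-i}$ directly and states $\mu_i-\mu_{n-i}=e\log(i/(n-i))+O(\log\log n)$; as written, the $O(\log\log n)$ would swallow $k/3$ for fixed $k$ and large $n$, so the ``$o(1)$'' in your interval description is not well-defined. This is easy to repair by exactly the paper's trick of passing through $\mu_n$, after which your argument goes through. Your observation that small $k$ are handled trivially once $4c\geq 1$ is also how the constants are ultimately reconciled.
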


The proof of \cref{lem:estimates} is deferred to the end of \cref{subseq:remov-1}.

\begin{proof}[Proof of \cref{prop:increaseheight}(i)] We set $m=\lfloor n^{1/9} \rfloor$ in \eqref{eq:Xb-and-Xb-hat-i}.

We may first assume that  $(\mathcal{T}_{k}(\Xb))_{ 0 \leq k \leq 2m-1}=(\mathcal{T}_{k}(\hat{\Xb}))_{ 0 \leq k \leq 2m-1}$. Observe that $\mathcal{T}_{2m-1}(\Xb)=\mathcal{T}_{2m-1}(\hat{\Xb})$ is a random recursive tree with $m$ edges, where $2$ uniform vertices are active and the other $m-1$  are frozen. We denote by $U_{m}$ and ${V}_{m}$ these two active vertices. Without loss of generality, we may assume that in $\mathcal{T}_{2m}(\Xb)$, $V_m$ is frozen and $U_{m}$ is the only active vertex. Finally, denote by $U'_{m}$ the new active vertex of $\mathcal{T}_{2m+1}(\Xb)$, which is a child of $U_{m}$.

We can then naturally couple $(\mathcal{T}_{k+2}(\Xb))_{ 2m-1 \leq k \leq 2m-1+n}$ and $(\mathcal{T}_{k}(\hat{\Xb}))_{ 2m-1 \leq k \leq 2m-1+n}$ so that the following holds (see \cref{fig:freeze_tree_removal} for an illustration):
\begin{enumerate}
\item[--] $\mathcal{T}(\Xb)$ is obtained by grafting  $\mathcal{R}^{2}_{n-I_n}$  on $U_{m}$ and  $ \mathcal{R} ^{1}_{I_n}$ on $U'_{m}$,
\item[--]$\mathcal{T}(\hat{\Xb})$ is obtained by grafting  $\mathcal{R}^{2}_{n-I_n}$  on $V_{m}$ and  $ \mathcal{R} ^{1}_{I_n}$ on $U_{m}$.
\end{enumerate}
This follows from the well-known fact that in a random recursive tree with $n+1$ edges, when removing the first added edge, the number of edges of the connected component containing the root is uniformly distributed on $\{0,1,\ldots,n\}$.

\begin{figure}[h!]
    \centering
    \includegraphics[scale=1]{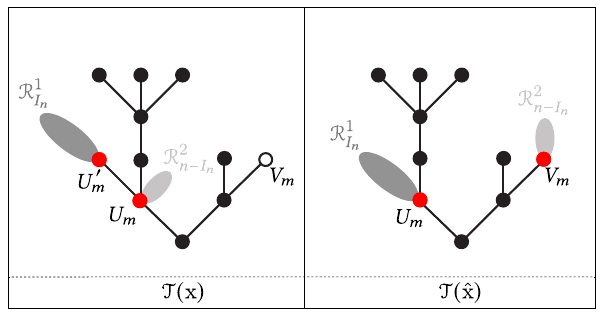}
    \caption{An illustration of the coupling of $\T(\Xb)$ and $\T(\hat{\Xb})$ in the proof of  \cref{prop:increaseheight}(i). The empty vertex is frozen. 
    }
    \label{fig:freeze_tree_removal}
\end{figure}

This coupling implies that the following equalities in distribution hold jointly:

\begin{equation}\label{eq:HeightCoupled}
\begin{aligned}
\Height(\T(\Xb)) &\ed \max\Big(\Height(\mathcal{R}_m),\Ht(U_m)+1+\Height(\mathcal{R}^{1}_{I_n}),\Ht(U_m)+\Height(\mathcal{R}^{2}_{n-I_n})\Big),\\
\Height(\T(\hat{\Xb})) &\ed \max\Big(\Height(\mathcal{R}_m),\Ht(U_m)+\Height(\mathcal{R}^{1}_{I_n}),\Ht(V_m)+\Height(\mathcal{R}^{2}_{n-I_n})\Big).
\end{aligned}
\end{equation}
The intuition behind the remainder of the proof is that $\max(\Ht(U_m),\Ht(V_m))$ is typically larger than $\Ht(U_m)+1$. Formally, we let $\Delta_{n}=\Height(\mathcal{R}^{2}_{n-I_n})-\Height(\mathcal{R}^1_{I_n})$. Using the identity $\max(a,b)=a+\max(0,b-a)$, we decompose the maxima by writing
 \begin{equation}\label{eq:couplingmadeeasy}
\begin{aligned}
\Height(\T(\Xb)) &\ed \Height(\mathcal{R}^{1}_{I_n})+\Ht(U_m)+\max(1,\Delta_{n} )+\max(0,{Z}_{m,n} )\\
\Height(\T(\hat{\Xb})) &\ed \Height(\mathcal{R}^{1}_{I_n})+\Ht(U_m)+\max\big(0,\Ht(V_m)-\Ht(U_m)+\Delta_{n}\big)+ \max\big(0,\hat{Z}_{m,n} \big),
\end{aligned}
\end{equation}
where
\[\begin{aligned}
    Z_{m,n} &=\Height(\mathcal{R}_m)-\max\big(\Ht(U_m)+1+\Height(\mathcal{R}^{1}_{I_n}),\Ht(U_m)+\Height(\mathcal{R}^2_{n-I_n})\big) \ \text{ and } \\ 
    \hat{Z}_{m,n} &=\Height(\mathcal{R}_m)-\max\big(\Ht(U_m)+\Height(\mathcal{R}^{1}_{I_n}),\Ht(V_m)+\Height(\mathcal{R}^2_{n-I_n})\big).
\end{aligned}\]
We claim that both $\Es{\max(0,{Z}_{m,n})} \rightarrow 0$ and $\Es{\max(0,\hat{Z}_{m,n})} \rightarrow0$. Indeed, observe that 
\begin{equation*}
\max\big(0,{Z}_{m,n} \big) \leqst m \cdot \mathbbm{1}_{ \Height(\mathcal{R}_m)\geq \Height(\mathcal{R}^{1}_{\lfloor n/2\rfloor})} \qquad \textrm{and} \qquad \max\big(0,\hat{Z}_{m,n} \big) \leqst m \cdot \mathbbm{1}_{ \Height(\mathcal{R}_m)\geq \Height(\mathcal{R}^{1}_{\lfloor n/2\rfloor})}.
\end{equation*}
since $\Height(\mathcal{R}_m)\leq m$ and 
$\Height(\mathcal{R}^{1}_{\lfloor n/2\rfloor}) \leqst \max\big(\Height(\mathcal{R}^{1}_{I_n}), \Height(\mathcal{R}^2_{n-I_n}) \big)$.
The fact that $\Es{\max(0,{Z}_{m,n})} \rightarrow 0$ and $\Es{\max(0,\hat{Z}_{m,n})} \rightarrow 0$ then follows from \cref{lem:estimates}(i), using  $m=\lfloor n^{1/9} \rfloor$. 

Now observe that  $\Es{|\Delta_{n}|}=O(1)$ by \cref{lem:estimates}(ii). Using the bounds
$$\max\Big(0,\Ht(V_m)-\Ht(U_m)\Big)- |\Delta_{n}|  \leq \max\Big(0,\Ht(V_m)-\Ht(U_m)+\Delta_{n}\Big) \leq \max\Big(0,\Ht(V_m)-\Ht(U_m)\Big)+|\Delta_{n}|$$
combined with \eqref{eq:couplingmadeeasy}, it follows
that
\begin{align*}
    \Es{\Height(\T(\hat{\Xb}))} -\Es{\Height(\T(\Xb))} &= \Es{\max\Big(0,\Ht(V_m)-\Ht(U_m)\Big)}+O(1) \\ 
    &= \tfrac{1}{2}\Es{\big|\Ht(U_m)-\Ht(V_m)\big|}+O(1),
\end{align*}
where the final equality is due to the exchangeability of $(U_m, V_m)$.

To conclude, it suffices to check that $\lim_{m\to\infty}\Es{\big|\Ht(U_m)-\Ht(V_m)\big|}=+\infty$. This is a direct consequence of~\citet[Theorem 2.4]{MR4512393} applied with $k=2$ and $a_1=a_2=0$, which implies that $(\Ht(U_m)-\Ht(V_m))/ \sqrt{\ln(m)}$ converges in distribution to a $ \mathcal{N}(0,2)$ random variable. This
implies that
\begin{equation}
    \label{eq:cvinfinity}
    \Es{\Height(\T(\hat{\Xb}))} -\Es{\Height(\T(\Xb))} \to \infty
\end{equation}
as $n\to \infty$, which completes the proof.
\end{proof}

\begin{proof}[Proof of \cref{lem:estimates}]
We use~\citet[Corollary 1.3]{AF13}, which gives the existence of a constant $c>0$ such that for every $n, k \geq 0$,
\begin{equation}
\label{eq:AF}
\Pr{\left| \Height(\mathcal{R}_n)-\Es{\Height(\mathcal{R}_n)} \right| \geq k} \leq c e^{- \frac{1}{3e}k}.
\end{equation}
For the first assertion, we bound
$\Pr{ \Height(\mathcal{R}_{\lfloor n^{1/9} \rfloor})  \geq \Height(\mathcal{R}^{1}_{\lfloor n/2\rfloor})}$ from above by 
  $$ \Pr{ \Height(\mathcal{R}_{\lfloor n^{1/9} \rfloor})\geq \frac{e}{2}\log(n)}+\Pr{\Height(\mathcal{R}^{1}_{\lfloor n/2\rfloor}) \leq \frac{e}{2}\log(n)}.
$$
By \eqref{eq:H}, for $n$ sufficiently large we have
 $$\left \{ \Height(\mathcal{R}_{\lfloor n^{1/9} \rfloor})\geq \frac{e}{2}\log(n)\right\} \subset \left \{ \Height(\mathcal{R}_{\lfloor n^{1/9} \rfloor}) - \Es{\Height(\mathcal{R}_{\lfloor n^{1/9} \rfloor})} \geq \frac{3e}{8} \log(n)  \right\} $$
 and
  $$\left \{\Height(\mathcal{R}^{1}_{\lfloor n/2\rfloor}) \leq \frac{e}{2}\log(n) \right\} \subset \left \{ \Height(\mathcal{R}_{\lfloor n/2\rfloor})-\Es{\Height(\mathcal{R}_{\lfloor n/2\rfloor})}  \leq - \frac{3e}{8} \log(n)  \right\} .$$
 Bounding both of these using \eqref{eq:AF} yields the desired result in (i).

Now, to simplify notation, we set $H^1_{i}=\Height(\mathcal{R}^{1}_{i})$, $H^{2}_{i}=\Height(\mathcal{R}^{2}_{i})$ and $\mathsf{E}(i)=\Es{\Height(\mathcal{R}_{i})}$ for each $0 \leq i \leq n$, so that $\mathsf{E}(I_n)$ is the random variable $\Es{\Height(\mathcal{R}_{I_n}) \,|\, I_n}$. 
First, the asymptotic expansion \eqref{eq:H} ensures the existence of a constant $C>0$ such that every $n \geq 1$  and $0 \leq i \leq n$, 
\begin{equation}
\label{bound:esp_In-n}
\left|\mathsf{E}(n)-\mathsf{E}(i)\right|\leq C\left|\log \frac{1+i}{1+n}\right|+C= - C\log \frac{1+i}{1+n}+C.
\end{equation}
Next, we write
$$
|\Height(\mathcal{R}^{2}_{I_n})-\Height(\mathcal{R}^1_{n-I_n})| \leq  \left|H^{2}_{I_n}- \mathsf{E}(I_{n}) \right| + \left|H^{1}_{n-I_n}- \mathsf{E}(n-I_{n})\right|
+ \left| \mathsf{E}(I_{n})- \mathsf{E}(n)\right| + \left| \mathsf{E}(n-I_{n})-\mathsf{E}(n)\right|.
$$
Since $I_n$ has the same law as $n-I_n$, we obtain that for every $n,k \geq 1$,
\[\Pr{ |\Height(\mathcal{R}^{2}_{I_n})-\Height(\mathcal{R}^1_{n-I_n})| \geq k }  \leq  2\Pr{\left|H^{1}_{I_n}-  \mathsf{E}(I_{n}) \right| \geq k/4}+ 2 \Pr{\left| \mathsf{E}(I_{n})-\mathsf{E}(n)\right|\geq k/4}.\]
By conditioning on $I_{n}$ in the first term of the right hand side and using \eqref{bound:esp_In-n}, it follows that
\[\Pr{ |\Height(\mathcal{R}^{2}_{I_n})-\Height(\mathcal{R}^1_{n-I_n})| \geq k }  \leq  2\max_{0\leq i\leq n}\Pr{\left|H^{1}_{i}-  \mathsf{E}(i)\right| \geq k/4}+ 2 \Pr{\frac{1+I_n}{1+n}\leq e^{1- k/(4C)}}.\]
Using \eqref{eq:AF} and the fact that $I_n$ is uniform on $\{0, \dots, n\}$, we conclude that
\[
\Pr{ |\Height(\mathcal{R}^{2}_{I_n})-\Height(\mathcal{R}^1_{n-I_n})| \geq k }  \leq  2 c e^{-\frac{1}{12e} k} +2 e^{1- \frac{1}{4C} k}. \qedhere
\]
\end{proof}

\subsection{Removing an attachment step immediately followed by a freezing step can increase the expected height}
\label{subseq:remov+1-1}

Here, we establish \cref{prop:increaseheight}(ii). We set $m=\lfloor n^{1/9} \rfloor$ and
$$\Xb= (+1)^m (-1)^{m-1} (+1,-1)(+1)^n \qquad \textrm{and} \qquad \hat{\Xb}= (+1)^m (-1)^{m-1} (+1)^n.$$
As in the proof of item (i), this proof is also based on a coupling between $\T(\Xb)$ and $\T(\hat{\Xb})$.  Specifically, we may first assume that  $(\mathcal{T}_{k}(\Xb))_{ 0 \leq k \leq 2m-1}=(\mathcal{T}_{k}(\hat{\Xb}))_{ 0 \leq k \leq 2m-1}$. Observe that $\mathcal{T}_{2m-1}(\Xb)=\mathcal{T}_{2m-1}(\hat{\Xb})$ is a random recursive tree where two uniform vertices, denoted by $U_m$ and $V_m$, are active and the other $m-1$ are frozen. Without loss of generality, we can assume that the three active vertices in $\mathcal{T}_{2m}(\Xb)$ are $U_m$, $V_m$ and $U'_m$, with $U'_m$ being a child of $U_m$. 

We keep the notation introduced in \cref{subseq:remov-1}: $(\mathcal{R}_{i})_{i \geq 0}$, $(\mathcal{R}^{1}_{i})_{i\geq 0}$ and $(\mathcal{R}_{i}^{2})_{i\geq 0}$ are three independent  random recursive trees without freezing indexed such that $ \mathcal{R}_{i}$, $ \mathcal{R}^{1}_{i}$ and $ \mathcal{R}^{2}_{i}$ have $i$ edges, and $I_n$ is an independent random variable uniformly distributed on $\{0,\ldots,n\}$. 

To define $\mathcal{T}(\Xb)$, we consider the three disjoint cases depending on which vertex among $U'_m$, $U_m$ and $V_m$ is frozen in $\mathcal{T}_{2m+1}(\Xb)$; see \cref{fig:freeze_tree}:

\begin{enumerate}
\item[(a)] if $U'_m$ is frozen in $\mathcal{T}_{2m+1}(\Xb)$, $\mathcal{T}(\Xb)$ is obtained by grafting  $ \mathcal{R} ^{1}_{I_n}$ on $U_{m}$ and  $\mathcal{R}^{2}_{n-I_n}$  on $V_{m}$,
\item[(b)] if $U_m$ is frozen in $\mathcal{T}_{2m+1}(\Xb)$, $\mathcal{T}(\Xb)$ is obtained by grafting  $ \mathcal{R} ^{1}_{I_n}$ on $U'_{m}$ and  $\mathcal{R}^{2}_{n-I_n}$  on $V_{m}$,
\item[(c)] if $V_m$ is frozen in $\mathcal{T}_{2m+1}(\Xb)$, $\mathcal{T}(\Xb)$ is obtained by grafting   $ \mathcal{R} ^{1}_{I_n}$  on $U'_{m}$ and $\mathcal{R}^{2}_{n-I_n}$ on $U_{m}$.
\end{enumerate}

Define $\mathcal{T}(\hat{\Xb})$ as follows.
\begin{enumerate}
\item[--]$\mathcal{T}(\hat{\Xb})$ is obtained by grafting  $ \mathcal{R} ^{1}_{I_n}$ on $U_{m}$ and  $\mathcal{R}^{2}_{n-I_n}$  on $V_{m}$.
\end{enumerate}

\begin{figure}[h!]
    \centering
    \includegraphics[scale=0.995]{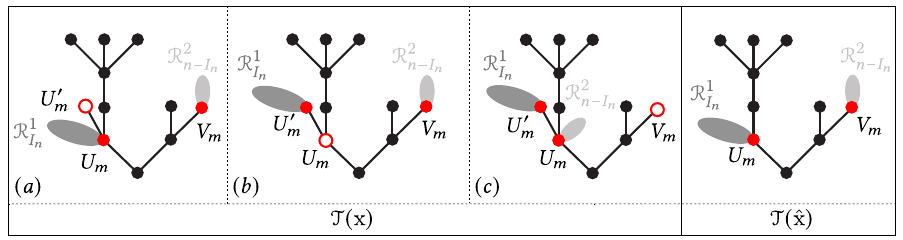}
    \caption{An illustration of the coupling of $\T(\Xb)$ and $\T(\hat{\Xb})$ in the proof of  \cref{prop:increaseheight}(ii).  The empty vertices are frozen.}
    \label{fig:freeze_tree}
\end{figure}
Observe that each of the cases (a)--(c) occurs with equal probability:
$$
\Pr{\textrm{$U'_m$ is frozen in $\mathcal{T}_{2m+1}(\Xb)$}}
=
\Pr{\textrm{$U_m$ is frozen in $\mathcal{T}_{2m+1}(\Xb)$}}
=
\Pr{\textrm{$V_m$ is frozen in $\mathcal{T}_{2m+1}(\Xb)$}}
=
\frac{1}{3}.
$$

\begin{proof}[Proof of \cref{prop:increaseheight}(ii).]
In case (a), if  $I_n\neq0$ then $\mathcal{T}(\Xb)$ and $\mathcal{T}(\hat{\Xb})$ have the same height. Otherwise, their heights differ by at most one. Thus
\begin{equation}\label{eq:case1Pr1/3}
    \Es{\Height(\mathcal{T}(\Xb))\mid \ U'_m \text{ is frozen in }\mathcal{T}_{2m+1}(\Xb)}\leq \Es{\Height(\mathcal{T}(\hat{\Xb}))}+\frac{1}{n+1}.  
\end{equation}

In case (b),  the heights of  $\mathcal{T}(\Xb)$ and $\mathcal{T}(\hat{\Xb})$ differ by at most $1$, so
\begin{equation}\label{eq:case2}
    \Es{\Height(\mathcal{T}(\Xb))\mid \ U_m \text{ is frozen in }\mathcal{T}_{2m+1}(\Xb)}\leq \Es{\Height(\mathcal{T}(\hat{\Xb}))}+1. 
\end{equation}

In case (c), we are in the same framework as the proof of \cref{prop:increaseheight}(ii) (compare with \cref{fig:freeze_tree_removal}), so by \eqref{eq:cvinfinity} we have
\begin{equation}\label{eq:case3}
     \Es{\Height(\mathcal{T}(\Xb))\mid \ V_m \text{ is frozen in }\mathcal{T}_{2m+1}(\Xb)}- \Es{\Height(\mathcal{T}(\hat{\Xb}))}  
     \to
     - \infty 
\end{equation}
as $n \to \infty$. Combining \eqref{eq:case1Pr1/3}, \eqref{eq:case2} and  \eqref{eq:case3} yields
    $$\Es{\Height(\T(\Xb))} -\Es{\Height(\T(\hat{\Xb}))}  
    \to -\infty
    $$
as $n \to\infty$, completing the proof.
\end{proof}

\subsection{Removing an attachment step can increase the expected height}

In this section, we establish \cref{prop:increaseheight}(iii). We now set 
$$\Xb=(+1,+1,-1) (+1)^n\qquad \textrm{and} \qquad \hat{\Xb}=(+1,-1) (+1)^n.$$
We show that $ \Es{\Height(\T(\hat{\Xb}))} >\Es{\Height(\T(\Xb))}$ for $n$ sufficiently large via an appropriate coupling.

To this end, we first introduce some notation. Recall that $ \mathcal{R}_{n}$ denotes a random recursive tree with $n$ edges. We let $T_n^1$ and $T_n^2$ be the subtrees obtained by deleting the edge $(1,2)$ from $\mathcal{R}_n$, such that $T_n^i$ is rooted at $i$ for $i\in\{1,2\}$.  We then couple $ \T(\hat{\Xb})$, $\T({\Xb})$ and $ \mathcal{R}_{n}$ as follows. Observe that $\mathcal{T}(+1,+1,-1)$ and $\mathcal{T}(+1,-1,+1)$ both have $3$ vertices, two of which are active. Then,

\begin{enumerate}
\item[--] $\mathcal{T}(\Xb)$  is obtained from $\mathcal{T}(+1,+1,-1)$ by grafting $T_{{n+1}}^1$ on  its active vertex which appeared first, and by grafting $T_{n+1}^2$ on the other active vertex;
\item[--] $\mathcal{T}(\hat{\Xb})$ is obtained from $\mathcal{T}(+1,-1,+1)$ by grafting $T_{{n}}^1$ on  its active vertex which appeared first, and by grafting $T_{n}^2$ on the other active vertex;
\end{enumerate}
See \cref{fig:config} for an illustration of the coupling.

\begin{figure}
    \centering
    \includegraphics[scale=1]{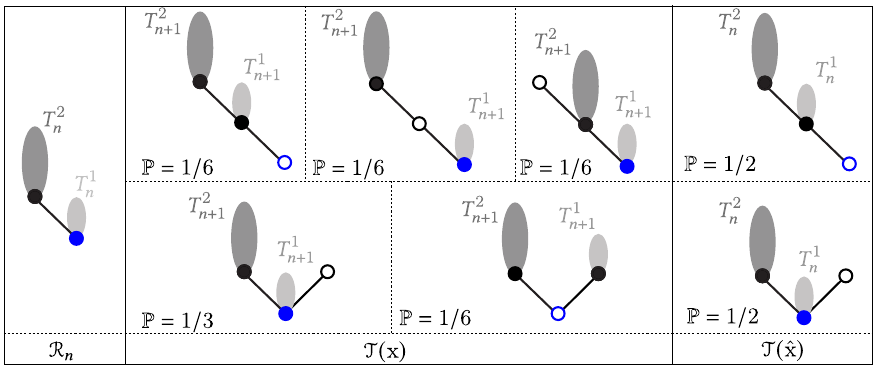}
    \caption{An illustration of the coupling between $ \mathcal{R}_{n}$,  $\T(\Xb)$ and $\T(\hat{\Xb})$ in the proof of \cref{prop:increaseheight}(iii). The root is blue, and the empty vertices are frozen. The probability of each configuration is indicated. }
    \label{fig:config}
\end{figure}

The idea of the proof is to give different expressions for $ \Es{\Height(\T(\hat{\Xb}))}$, $\Es{\Height(\T(\Xb))}$, and $\Es{\Height(\mathcal{R}_{n})}$ using the coupling. First, by the definition of $T^1_{n}$ and $T^{2}_{n}$, we clearly have
\begin{equation}
\label{eq:HRn}
\Height(\mathcal{R}_n)=\max\left( \Height(T_n^1), 1+ \Height(T_n^2)\right).
\end{equation}
Then, by the construction of $\mathcal{T}(\hat{\Xb})$ we have
\begin{align}
\Es{\Height(\mathcal{T}(\hat{\Xb}))}
    &=\frac{1}{2}\Es{\max\left(1+\Height(T_{{n}}^1), 2+ \Height(T_{{n}}^2)\right)} + \frac{1}{2} \Es{\max\left(\Height(T_{{n}}^1), 1+ \Height(T_{{n}}^2)\right)} \notag \\
    &= \Es{\Height(\mathcal{R}_{{n}})}+ \frac{1}{2},  \label{eq:ht-Tx-hat-iii}
\end{align}
where we use \eqref{eq:HRn} for the second equality. Similarly, we have
\begin{align*}
    \Es{\Height(\mathcal{T}(\Xb))}  &= \frac{1}{6}\Es{\max\left(1+\Height(T_{{n+1}}^1), 2+\Height(T_{{n+1}}^2)\right)  } +\frac{1}{6}\Es{\max\left(\Height(T_{{n+1}}^1), 2+\Height(T_{{n+1}}^2)\right)}\\
    & \quad + \frac{1}{6}\Es{\max\left(2, \Height(T_{{n+1}}^1), 1+\Height(T_{{n+1}}^2)\right) } +\frac{1}{3}\Es{\max\left(\Height(T_{{n+1}}^1),1+\Height(T_{{n+1}}^2)\right) }\\
    & \quad + \frac{1}{6}\Es{ \max\left(1+\Height(T_{{n+1}}^1), 1+\Height(T_{{n+1}}^2) \right)}.
\end{align*}
Simplifying gives
\begin{align*}
    \Es{\Height(\mathcal{T}(\Xb))} &= \frac{1}{3}+ \frac{2}{3} \Es{ \max\left(\Height(T_{{n+1}}^1), 1+\Height(T_{{n+1}}^2)\right) }  + \frac{1}{6} \Es{ \max\left(\Height(T_{{n+1}}^1),\Height(T_{{n+1}}^2 )\right)}  \\ 
    &\quad +\frac{1}{6}\Es{\max\left(\Height(T_{{n+1}}^1), 2+\Height(T_{{n+1}}^2)\right)} + \frac{1}{6}\Pr{\max\left(\Height(T_{n+1}^1), 1+\Height(T_{n+1}^2)\right)=1}.
\end{align*}
To compare this quantity with $\Es{\Height(\mathcal{R}_{{n+1}})}$, we introduce the events $A_{n}=\{\Height(T_{{n}}^1)<\Height(T_{{n}}^2)+1\}$, $B_{n}=\{\Height(T_{{n}}^1)=\Height(T_{{n}}^2)+1\}$ and $ C_{n}=\{\Height(T_{{n}}^1)>\Height(T_{{n}}^2)+1\}$.  Then,
$$
\Es{\Height(\mathcal{R}_n)}= \Es{(1+\Height(T_{{n}}^2)) \mathbbm{1}_{A_{n} \cup B_{n}}}+\Es{\Height(T_{{n}}^1) \mathbbm{1}_{C_{n}}}
$$
and
\begin{align*}
\Es{\Height(\mathcal{T}(\Xb))} 
&=
\frac{1}{3}+ \Es{ \left(1+ \Height(T_{{n+1}}^2) \right) \mathbbm{1}_{A_{n+1}}}+ \Es{ \left(1+ \frac{1}{6}+ \Height(T_{{n+1}}^2)\right) \mathbbm{1}_{B_{n+1}}}+ \Es{ \Height(T_{{n+1}}^2) \mathbbm{1}_{C_{n+1}}}\\
&\quad + \frac{1}{6}\Pr{\max \left(\Height(T_{n+1}^1), 1+\Height(T_{n+1}^2)\right)=1}.
\end{align*}
As a consequence, we have
$$\Es{\Height(\mathcal{T}(\Xb))}= \frac{1}{3} + \Es{\Height(\mathcal{R}_{{n+1}})}+\frac{1}{6}\Pr{\Height(T_{{n+1}}^1)=\Height(T_{{n+1}}^2)+1} {+ \frac{1}{6}\Pr{\Height(\mathcal{R}_{n+1})=1}}.$$
Now, since $(T_{{n+1}}^1,T_{{n+1}}^2)$ is exchangeable,
\begin{align*}
2\Pr{\Height(T_{{n+1}}^1)=\Height(T_{{n+1}}^2)+1}  &= \Pr{\Height(T_{{n+1}}^1)=\Height(T_{{n+1}}^2)+1}+\Pr{\Height(T_{{n+1}}^2)=\Height(T_{{n+1}}^1)+1}\\
&=\Pr{\Height(T_{{n+1}}^1)-\Height(T_{{n+1}}^2)\in\{1,-1\}}
\end{align*}
which is at most $1$. Thus, using \eqref{eq:ht-Tx-hat-iii}, we have
\begin{align}
    \Es{\Height(\mathcal{T}(\Xb))} {- \frac{1}{6}\Pr{\Height(\mathcal{R}_{n+1})=1}}
    & 
    \leq \Es{\Height(\mathcal{R}_{{n+1}})}+\frac{1}{3} +\frac{1}{12} \notag \\ 
    &=\Es{\Height(\mathcal{T}(\hat{\Xb}))}-\frac{1}{12}+\Es{\Height(\mathcal{R}_{{n+1}})-\Height(\mathcal{R}_{{n}})}.
\label{eq:HeightComparison}
\end{align}
It is clear that $\Pr{\Height(\mathcal{R}_{n+1})=1}=1/(n+1)!$, which goes to $0$ as $n\to\infty$.
To complete the proof we show that
\begin{equation}
\label{eq:dif}
\Es{\Height(\mathcal{R}_{{n+1}})-\Height(\mathcal{R}_{{n}})} 
\to 0
\end{equation}
as $n \to\infty$. To this end, note that when we add an edge, the height of the tree increases by at most $1$. In fact, it increases if and only if the new vertex connects to a vertex of maximum height, so that
\begin{equation}
\label{eq:HeightIncrease}
    \Es{\Height(\mathcal{R}_{{n+1}})-\Height(\mathcal{R}_{{n}})} =  \Pr{\Height(\mathcal{R}_{{n+1}})=\Height(\mathcal{R}_{{n}})+1}
    = \Pr{\Ht(U_{{n}})=\Height(\mathcal{R}_{{n}})},
\end{equation}
where $U_{{n}}$ is a vertex of $\mathcal{R}_{{n}}$ chosen uniformly at random. It is well known (see, e.g.,~\citet[Theorem O1]{devroye1988applications}) that $\Ht(U_{{n}})$ converges in probability to $\log(n)$, but by \eqref{eq:H}, 
$\Pr{\Height(\mathcal{R}_{{n}}) \geq 1.1 \log(n)} \rightarrow1$ as $n \rightarrow\infty$. This implies \eqref{eq:dif} and completes the proof.


Note that even though $ \Es{\Height(\T(\hat{\Xb}))} >\Es{\Height(\T(\Xb))}$ for $n$ sufficiently large, $\Height(\T(\hat{\Xb}))$ does not stochastically dominate $\Height(\T(\Xb))$. It is indeed a simple matter to check that
$$\Pr{\Height(\mathcal{T}(\Xb))=1}=\frac{1}{3}\frac{1}{({n+1})!}{<} \frac{1}{2}\frac{1}{{n}!}= \Pr{\Height(\mathcal{T}(\hat{\Xb}))=1}.$$

\section{Removing the first consecutive attachment and freezing steps stochastically decreases the height}
\label{sec:3}

\subsection{An alternative construction of uniform attachment trees with freezing}
\label{sec:3.1}

Consider a choice sequence $\Xb= (\X_i)_{1 \leq i \leq m} \in \{-1,+1\}^m$ such that $S_{j}(\Xb)>0$ for every $j \in \iinterval{1}{m-1}$. Here we recall from \citet{BBKK23+}   an alternative time-reversed construction of uniform attachment trees with freezing, which  can be seen as a growth coalescence process of forests. This is a generalization of the connection between random recursive trees and Kingman's coalescent introduced by~\citet{DR76}.

If $T_{1}$ and $T_{2}$ are two rooted trees, we denote by $T_{2} \rightarrow T_{1}$ the tree   obtained by adding an edge between the roots  of $T_{1}$ and $T_{2}$ and keeping the root of $T_{1}$. We say that $T_{2} \rightarrow T_{1}$ is the tree obtained by grafting $T_{2}$ on the root of $T_{1}$. Finally, we denote by  {$\circled{f}$} the one-vertex rooted tree labelled {$f$}.

\algohead{algo2}
We inductively construct a sequence  $(\F^{m}(\Xb),\F^{m-1}(\Xb), \ldots, \F^{0}(\Xb))$ of forests of rooted vertex-labelled trees, in such a way that for every $0 \leq i \leq m$, the forest $\F^{i}(\Xb)$ is an \emph{ordered}\footnote{As with most coalescent processes, the law of the final forest $\F^0(\Xb)$, which consists of the single tree $\mathsf{T}^0_{m}(\Xb)$, does not depend on the ordering of the ancestral blocks; thus, our decision to work with ordered lists of trees, and our care in appending each new vertex {$\circled{f}$} in the last position of the list, might come as a surprise. In fact, this choice is made solely in preparation for the next subsection, where this framework simplifies the description of a coupling between \cref{algo2} run for two choice sequences $\Xb$ and $\hat{\Xb}$ (\cref{algo3}).} list of $S_{i}(\Xb)$ trees, as follows:
\begin{compitem}
\item Let $\F^{m}(\Xb)$ be a forest made of $S_{m}(\Xb)$ one-vertex rooted trees with labels $a$.
\item For every $1\leq i \leq m$, if $\F^{i}(\Xb)$ has been constructed, define $\F^{i-1}(\Xb)$ as follows:
\begin{compitem}
\item[(a)] if $\X_{i}=-1$, $\F^{i-1}(\Xb)$ is obtained by adding {$\circled{f}$} to $\F^{i}(\Xb)$ in the last position;
\item[(b)] if $\X_{i}=1$, let $(\mathsf{A}_{i},\mathsf{B}_{i})$ be a uniform choice of two different integers in $ \iinterval{1}{S_{i}(\Xb)}$, independently of the previous choices.  Writing $\F^{i}(\Xb)=(T^{i}_{1}, \ldots,T^{i}_{S_{i}(\Xb)})$,
we set
$$
\F^{i-1}(\Xb)=
\begin{cases}
(T^{i}_{1}, \ldots, T^{i}_{\mathsf{A}_{i}-1},T^{i}_{\mathsf{B}_{i}}\rightarrow T^{i}_{\mathsf{A}_{i}} ,T^{i}_{\mathsf{A}_{i}+1}, \ldots, T^{i}_{\mathsf{B}_{i}-1}, T^{i}_{\mathsf{B}_{i}+1}, \ldots , T^{i}_{S_{i}(\Xb)}) & \textrm{ if } \mathsf{A}_{i}<\mathsf{B}_{i},\\
(T^{i}_{1}, \ldots,T^{i}_{\mathsf{B}_{i}-1} ,T^{i}_{\mathsf{B}_{i}+1}, \ldots, T^{i}_{\mathsf{A}_{i}-1}, T^{i}_{\mathsf{B}_{i}}\rightarrow T^{i}_{\mathsf{A}_{i}},T^{i}_{\mathsf{A}_{i}+1}, \ldots,T^{i}_{S_{i}(\Xb)}) & \textrm{ if } \mathsf{A}_{i}> \mathsf{B}_{i}.\\
\end{cases}
$$
 In words, $\F^{i-1}(\Xb)$ is obtained from $\F^{i}(\Xb)$  by removing $T^{i}_{\mathsf{B}_i}$ and replacing $T^{i}_{\mathsf{A}_i}$ with $T^{i}_{\mathsf{B}_{i}}\rightarrow T^{i}_{\mathsf{A}_{i}}$.
\end{compitem}
\item Let $\mathsf{T}^0_{m}(\Xb)$ be the only tree of $\F^0(\Xb)$.
\end{compitem}

\medskip 

By \citet[Theorem 8]{BBKK23+}, the two {rooted vertex-labelled} trees $\T(\Xb)$ and $\mathsf{T}^0_{m}(\Xb)$ have the same law. An important feature of \cref{algo2} is that for every $i \in \iinterval{0}{m}$, the forest $\F^{i}(\Xb)$ is made of $S_{i}(\Xb)$ trees.

\subsection{A coupling for two different sequences}
\label{sec:3.2}

 Consider $\Xb= (\X_i)_{1 \leq i \leq m} \in \{-1,+1\}^m$ such that $S_{j}(\Xb)>0$ for every $j \in \iinterval{1}{m-1}$.  Assume that there exists $k \in \{1,2, \ldots,m-1\}$ such that $\X_1=\X_2=\cdots=\X_{k}=1$ and $\mathrm x_{k+1}=-1$. Let $\hat{\Xb} \in \{-1,+1\}^{m-2}$ be the sequence obtained from $\Xb$ by removing $\mathrm x_k=+1$ and $\mathrm x_{k+1}=-1$.  To simplify notation, for $i \in \iinterval{0}{m}$ we write $\F^{i}$ for $\F^{i}(\Xb)$,  for $i \in \iinterval{0}{m-2}$ we write $\Fh^{i}$ for $\F^{i}(\Xbh)$, and for $i \in \iinterval{0}{k-1}$ we set $S_i=S_i(\Xb)=S_i(\Xbh)$.

The proof of \cref{prop:decreaseheight} is based on a coupling between \cref{algo2} run for $\Xb$ and $\hat{\Xb}$, which intuitively goes as follows. First, for $i=m,m-1, \ldots,k+2$, the random integers in step (b) are chosen to be the same for $\Xb$ and $\hat{\Xb}$. When reading $\X_{k+1}=-1$ in $\Xb$ we add a new ``distinguished vertex'', and when we read $\X_k=+1$ in $\Xb$ we perform a standard grafting step (b). Then, as long as the distinguished vertex has not been used, the trees chosen at grafting steps in $\mathcal{F}(\Xb)$ and $\mathcal{F}(\Xbh)$ will be the same, although with a delay for $\mathcal{F}(\Xbh)$. 

In order to build this coupling it will be convenient to also define a sequence $(\mathsf{M}_i)_{ 0 \leq i \leq k-1}$ of $\{0,1\}$-valued random variables and a sequence $(\mathsf{E}_i)_{0 \leq i \leq k-1}$ of $\N^2 \cup \{\varnothing\}$-valued random variables. Roughly speaking, $(\mathsf{M}_i)_{ 0 \leq i \leq k-1}$ encodes the first time the distinguished vertex has been used in $\mathcal{F}(\Xb)$, and $(\mathsf{E}_i)_{0 \leq i \leq k-1}$ encodes the trees chosen at grafting steps in $\mathcal{F}(\Xb)$ which will be used with a delay in $\mathcal{F}(\Xbh)$. See \cref{fig:coupling} below for an example.

\algohead{algo3}
First, for $i=m,m-1, \ldots,k+2$, the random integers in step (b) of \cref{algo2} are chosen to be the same for   $\Xb$ and $\hat{\Xb}$, so that
 $$
  (\F^{m},\F^{m-1}, \ldots, \F^{k+1})= (\Fh^{m-2},\Fh^{m-3}, \ldots, \Fh^{k-1}).$$

\subparagraph{Initialization step.} We define $\F^{k}$ and $\F^{k-1}$ as follows. Writing $\F^{k+1}=(T^{k+1}_{1}, \ldots, T^{k+1}_{S_{k+1}})$, set
$$\F^{k}=\left({\circled{f}}\,,T^{k+1}_{1}, \ldots, T^{k+1}_{S_{k+1}}\right),$$
where we recall that ${\circled{f}}$ denotes a one-vertex rooted tree labelled {$f$}. In words, $\F^{k}$ is obtained by adding ${\circled{f}}$ to $\F^{k+1}$, but instead of placing it in the last position in the forest as imposed by step (a) of \cref{algo2}, we consider that it is in $0$-th position.  

Then writing $\F^{k}=({\circled{f}}\,,T^{k}_{1}, \ldots, T^{k}_{S_{k-1}})$, build $\F^{k-1}$ as follows. Let $(\mathsf{A}_{k},\mathsf{B}_{k})$ be a uniform choice of two different integers in $ \iinterval{0}{{S_{k-1}}}$, independently of the previous choices. 
\begin{enumerate}
\item[(i)] If $\mathsf{A}_{k}>0$ and $\mathsf{B}_{k}>0$, set $\mathsf{M}_{k-1}=0$, $\mathsf{E}_{k-1}=(\mathsf{A}_{k},\mathsf{B}_{k})$, and perform step (b) in \cref{algo2} by grafting the $\mathsf{B}_{k}$-th tree on the $\mathsf{A}_{k}$-th tree in $\F^{k}$ and by removing the $\mathsf{B}_{k}$-th tree:
$$
\F^{k-1}=
\begin{cases}
({\circled{f}}\,,T^{k}_{1}, \ldots, T^{k}_{\mathsf{A}_{k}-1},T^{k}_{\mathsf{B}_{k}}\rightarrow T^{k}_{\mathsf{A}_{k}} ,T^{k}_{\mathsf{A}_{k}+1}, \ldots, T^{k}_{\mathsf{B}_{k}-1}, T^{k}_{\mathsf{B}_{k}+1}, \ldots, T^{k}_{S_{k-1}}) & \textrm{ if } \mathsf{A}_{k}<\mathsf{B}_{k},\\
({\circled{f}}\,,T^{k}_{1}, \ldots,T^{k}_{\mathsf{B}_{k}-1} ,T^{k}_{\mathsf{B}_{k}+1}, \ldots, T^{k}_{\mathsf{A}_{k}-1}, T^{k}_{\mathsf{B}_{k}}\rightarrow T^{k}_{\mathsf{A}_{k}},T^{k}_{\mathsf{A}_{k}+1}, \ldots,T^{k}_{S_{k-1}}) & \textrm{ if } \mathsf{A}_{k}> \mathsf{B}_{k}.\\
\end{cases}
$$
\item[(ii)] If $\mathsf{A}_k=0$ or $\mathsf{B}_k=0$, set $\mathsf{M}_{k-1}=1$, $\mathsf{E}_{k-1}=(\mathsf{A}_{k},\mathsf{B}_{k})$, and perform step (b) in \cref{algo2}, with the modification that we replace the $\max(\mathsf{B}_k,\mathsf{A}_k)$-th tree with the tree obtained by grafting the $\mathsf{B}_k$-th tree on the root of the $\mathsf{A}_k$-th tree in $\F^{k}$ and by removing the $0$-th tree:
$$
\F^{k-1}=
\begin{cases}
(T^{k}_{1}, \ldots, T^{k}_{\mathsf{B}_{k}-1},T^{k}_{\mathsf{B}_{k}}\rightarrow {\circled{f}}\,,T^{k}_{\mathsf{B}_{k}+1} ,  \ldots , T^{k}_{S_{k-1}}) & \textrm{ if } \mathsf{A}_{k}=0,\\
(T^{k}_{1}, \ldots, T^{k}_{\mathsf{A}_{k}-1},{\circled{f}} \rightarrow T^{k}_{\mathsf{A}_{k}},T^{k}_{\mathsf{A}_{k}+1} ,  \ldots,  T^{k}_{S_{k-1}}) & \textrm{ if }  \mathsf{B}_{k}=0.\\
\end{cases}
$$
\end{enumerate}
Observe that in case (ii), the tree ${\circled{f}}$ in $0$-th position of $\F^{k}$ is always removed.

We shall now recursively build $(\F^{i})_{0 \leq i \leq k-1}$,  $(\Fh^{i})_{0 \leq i \leq k-1}$, $(\mathsf{M}_i)_{ 0 \leq i \leq k-1}$ and  $(\mathsf{E}_i)_{0 \leq i \leq k-1}$ in such a way that for every $j \in \iinterval{0}{k-1}$, there is the tree ${\circled{f}}$ in $0$-th position in $\F^{j}$ if and only if $\mathsf{M}_j=0$.

\subparagraph{Inductive step.} Fix $ j \in \{1,2,\ldots,k-1\}$ and assume that $(\F^{i})_{j \leq i \leq m}$,  $(\Fh^{i})_{j \leq i \leq m-2}$, $(\mathsf{M}_i)_{ j \leq i \leq k-1}$ and  $(\mathsf{E}_i)_{j \leq i \leq k-1}$ have been constructed.

\begin{enumerate}
\item[(A)] If $\mathsf{M}_{j}=1$, set $\mathsf{M}_{j-1}=1$,  $\mathsf{E}_{j-1}=\mathsf{E}_j$. Then let $(\mathsf{A}_{j},\mathsf{B}_{j})$ be a uniform choice of two different integers in $ \iinterval{1}{S_{j}}$, independently of the previous choices and perform step (b) of \cref{algo2}:  graft the $\mathsf{B}_j$-th tree on the $\mathsf{A}_j$-th tree in $\F^{i}$, graft the $\mathsf{B}_j$-th tree on the $\mathsf{A}_j$-th tree in $\Fh^{j}$, and remove the $\mathsf{B}_j$-th trees.
\item[(B)]
If $\mathsf{M}_{j}=0$, writing $\mathsf{E}_j=(\mathsf{a},\mathsf{b})$, $\F^{j}=({\circled{f}}\,,T^{j}_{1}, \ldots, T^{j}_{S_{j}-1})$ and $\Fh^{j}=(\hat{T}^{j}_{1}, \ldots, \hat{T}^{j}_{S_{j}})$,
perform the following actions.   Build $\Fh^{j-1}$ by grafting the $\mathsf{b}$-th tree on the $\mathsf{a}$-th tree in $\Fh^{j}$ and by removing the $\mathsf{b}$-th tree:
$$\Fh^{j-1}=
\begin{cases}
(\hat{T}^{j}_{1}, \ldots, \hat{T}^{j}_{\mathsf{a}-1},\hat{T}^{j}_{\mathsf{b}}\rightarrow \hat{T}^{j}_{\mathsf{a}} ,\hat{T}^{j}_{\mathsf{a}+1}, \ldots, \hat{T}^{j}_{\mathsf{b}-1}, \hat{T}^{j}_{\mathsf{b}+1}, \ldots ,\hat{T}^{j}_{S_{j}}) & \textrm{ if } \mathsf{a}<\mathsf{b},\\
(\hat{T}^{j}_{1}, \ldots,\hat{T}^{j}_{\mathsf{b}-1} ,\hat{T}^{j}_{\mathsf{b}+1}, \ldots, \hat{T}^{j}_{\mathsf{a}-1}, \hat{T}^{j}_{\mathsf{b}}\rightarrow \hat{T}^{j}_{\mathsf{a}},\hat{T}^{j}_{\mathsf{a}+1}, \ldots,\hat{T}^{j}_{S_{j}}) & \textrm{ if } \mathsf{a}> \mathsf{b}.\\
\end{cases}
$$
To build $\F^{j-1}$, $\mathsf{M}_{j-1}$ and $\mathsf{E}_{j-1}$, let $(\mathsf{A}_{j},\mathsf{B}_{j})$ be a uniform choice of two different integers in $ \iinterval{0}{S_{j}-1}$, independently of the previous choices.
\begin{enumerate}
\item[(i)]If $\mathsf{A}_{j}>0$ and $\mathsf{B}_{j}>0$, set $\mathsf{M}_{j-1}=0$, $\mathsf{E}_{j-1}=(\mathsf{A}_j,\mathsf{B}_j)$,  in $\F^{j}$ graft the $\mathsf{B}_j$-th tree on the $\mathsf{A}_j$-th tree in $\F^{j}$ and remove the $\mathsf{B}_j$-th tree:
\end{enumerate}
\medskip
$$
\F^{j-1}=
\begin{cases}
({\circled{f}}\,,T^{j}_{1}, \ldots, T^{j}_{\mathsf{A}_{j}-1},T^{j}_{\mathsf{B}_{j}}\rightarrow T^{j}_{\mathsf{A}_{j}} ,T^{j}_{\mathsf{A}_{j}+1}, \ldots, T^{j}_{\mathsf{B}_{j}-1}, T^{j}_{\mathsf{B}_{j}+1}, \ldots , T^{j}_{S_{j}-1}) & \textrm{ if } \mathsf{A}_{j}<\mathsf{B}_{j},\\
({\circled{f}}\,,T^{j}_{1}, \ldots,T^{j}_{\mathsf{B}_{j}-1} ,T^{j}_{\mathsf{B}_{j}+1}, \ldots, T^{j}_{\mathsf{A}_{j}-1}, T^{j}_{\mathsf{B}_{j}}\rightarrow T^{j}_{\mathsf{A}_{j}},T^{j}_{\mathsf{A}_{j}+1}, \ldots,T^{j}_{S_{j}-1}) & \textrm{ if } \mathsf{A}_{j}> \mathsf{B}_{j}.\\
\end{cases}
$$
\smallskip
\begin{enumerate}
\item[(ii)] If $\mathsf{A}_j=0$ or $\mathsf{B}_j=0$, set $\mathsf{M}_{j-1}=1$, $\mathsf{E}_{j-1}=(\mathsf{A}_j,\mathsf{B}_j)$,  in $\F^{j}$ replace the $\max(\mathsf{B}_j,\mathsf{A}_j)$-th tree with the tree obtained by grafting $\mathsf{B}_j$-th tree on the root of the $\mathsf{A}_j$-th tree  and remove the $0$-th tree:
$$
\F^{j-1}=
\begin{cases}
(T^{j}_{1}, \ldots, T^{j}_{\mathsf{B}_{j}-1},T^{j}_{\mathsf{B}_{j}}\rightarrow {\circled{f}}\,,T^{j}_{\mathsf{B}_{j}+1} ,  \ldots , T^{j}_{S_{j}-1}) & \textrm{ if } \mathsf{A}_{j}=0,\\
(T^{j}_{1}, \ldots, T^{j}_{\mathsf{A}_{j}-1},{\circled{f}} \rightarrow T^{j}_{\mathsf{A}_{j}},T^{j}_{\mathsf{A}_{j}+1} ,  \ldots , T^{j}_{S_{j}-1}) & \textrm{ if }  \mathsf{B}_{j}=0.\\
\end{cases}
$$
\end{enumerate}
\end{enumerate}

{Note that in both steps (A) and (B), $\F^{j-1}$ is built from $\F^j$ by step (b) of \cref{algo2}. Since $x_j=1$ for $j\in\iinterval{1}{k-1}$, the sequence $(\F^m,\ldots,\F^0)$ has the same law as the output of \cref{algo2}, as desired. Similarly, in step (A), $\Fh^{j-1}$ is built from $\Fh^j$ by step (b) of \cref{algo2}. In step (B), the indices $(\mathsf{a},\mathsf{b})$ of the trees involved in the grafting operation to build $\Fh^{j-1}$ are the indices $(\mathsf{A}_{j+1},\mathsf{B}_{j+1})$ of the trees that were involved in the grafting operation to build $\F^{j}$, either during step (i) of the Initialization or during step (B)(i) of the Induction. Now observe that for $j\in\iinterval{1}{k-1}$, we have $\mathsf{M}_j=0$ if and only if $\min(\mathsf{A}_{j+1},\mathsf{B}_{j+1})>0$ and either $j=k-1$ or $\mathsf{M}_{j+1}=0$. Therefore, conditionally given that the algorithm performs step (B) at time $j$, i.e.~that $\mathsf{M}_j=0$,  the indices $(\mathsf{a},\mathsf{b})$ are a uniform choice of two different integers in $\iinterval{0}{S_{j+1}-1}\setminus\{0\}=\iinterval{1}{S_j}$ independently of all the previous choices, as required by step (b) of \cref{algo2}. Hence, \cref{algo3} is indeed a coupling between \cref{algo2} run for $\Xb$ and $\hat{\Xb}$.}

{Finally, the observation that in step (B) we have $(\mathsf{a},\mathsf{b})=(\mathsf{A}_{j+1},\mathsf{B}_{j+1})$ implies, by decreasing induction on $j\in\iinterval{1}{k-1}$, that if $\mathsf{M}_j=0$ then $\F^{j}=({\circled{f}}\,,T^{j}_{1}, \ldots, T^{j}_{S_{j}-1})$ if and only if $\Fh^{j-1}=(T^{j}_{1}, \ldots, T^{j}_{S_{j}-1})$.}

\begin{figure}
    \centering
     \def\myScale{0.5}
     \def\picturesize{50}
\begin{picture}(\picturesize,\picturesize)
\put(0,0){\includegraphics[scale=\myScale]{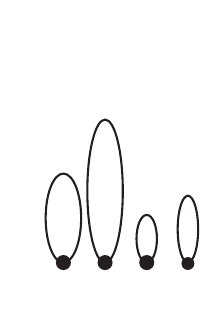}}
\put(20,-5){$\mathcal{F}^{5}$}
\end{picture}
\quad
\begin{picture}(\picturesize,\picturesize)
\put(0,0){\includegraphics[scale=\myScale]{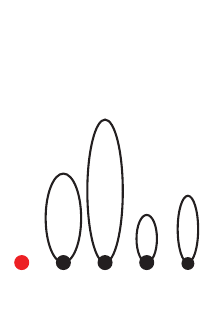}}
\put(20,-5){$\mathcal{F}^{4}$}
\end{picture}
\quad
\begin{picture}(\picturesize,\picturesize)
\put(0,0){\includegraphics[scale=\myScale]{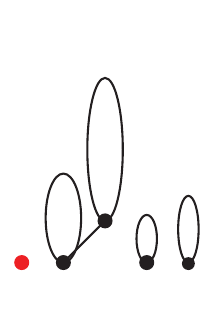}}
\put(20,-5){$\mathcal{F}^{3}$}
\end{picture}
\quad
\begin{picture}(\picturesize,\picturesize)
\put(0,0){\includegraphics[scale=\myScale]{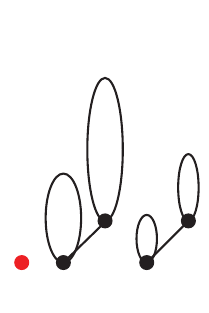}}
\put(20,-5){$\mathcal{F}^{2}$}
\end{picture}
\quad
\begin{picture}(\picturesize,\picturesize)
\put(0,0){\includegraphics[scale=\myScale]{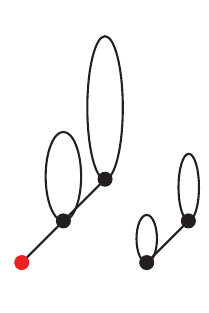}}
\put(20,-5){$\mathcal{F}^{1}$}
\end{picture}
\quad
\begin{picture}(\picturesize,\picturesize)
\put(0,0){\includegraphics[scale=\myScale]{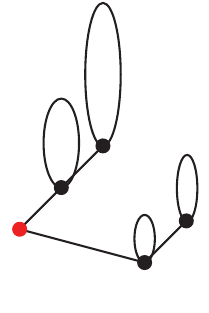}}
\put(20,-5){$\mathcal{F}^{0}$}
\end{picture}\\ \vspace{1cm}
\begin{picture}(\picturesize,\picturesize)
\put(0,0){}
\end{picture}
\quad
\begin{picture}(\picturesize,\picturesize)
\put(0,0){}
\end{picture}
\quad
\begin{picture}(\picturesize,\picturesize)
\put(0,0){\includegraphics[scale=\myScale]{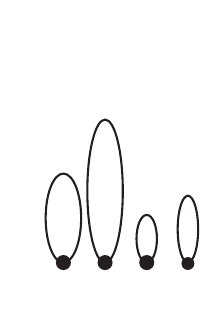}}
\put(20,-5){$\hat{\mathcal{F}}^{3}$}
\put(10,-15){$ \mathsf{M}_{3}=0$}
\put(5,-25){$ \mathsf{E}_{3}=(2,1)$}
\end{picture}
\quad
\begin{picture}(\picturesize,\picturesize)
\put(0,0){\includegraphics[scale=\myScale]{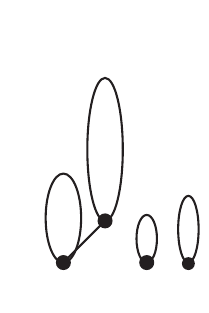}}
\put(20,-5){$\hat{\mathcal{F}}^{2}$}
\put(10,-15){$ \mathsf{M}_{2}=0$}
\put(5,-25){$ \mathsf{E}_{2}=(3,2)$}
\end{picture}
\quad
\begin{picture}(\picturesize,\picturesize)
\put(0,0){\includegraphics[scale=\myScale]{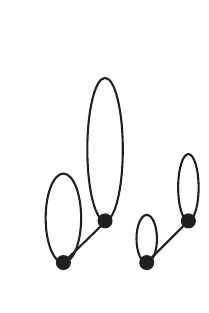}}
\put(20,-5){$\hat{\mathcal{F}}^{1}$}
\put(10,-15){$ \mathsf{M}_{1}=1$}
\put(5,-25){$ \mathsf{E}_{1}=(1,0)$}
\end{picture}
\quad
\begin{picture}(\picturesize,\picturesize)
\put(0,0){\includegraphics[scale=\myScale]{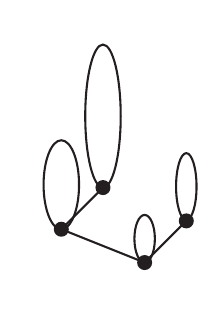}}
\put(20,-5){$\hat{\mathcal{F}}^{0}$}
\put(10,-15){$ \mathsf{M}_{0}=1$}
\put(5,-25){$ \mathsf{E}_{0}=(1,0)$}
\end{picture}
\vspace{1cm}
\caption{Example of the coupling for $k=4$, $\Xb=(+1,+1,+1,+1,-1,\ldots)$ and $\Xbh= (+1,+1,+1,\ldots)$. The vertex in red is the vertex  ${\protect\circled{f}}$ added in the $0$-th position in the initialization step.}
    \label{fig:coupling}
\end{figure}

The proof of \cref{prop:decreaseheight} is now effortless.

\begin{proof}[Proof of \cref{prop:decreaseheight}]
We consider the coupling introduced just before. Set  $j_1= \max \{j \in \iinterval{0}{k-1} : \mathsf{M}_j =1\}$ and observe that $j_1$ is well defined: since $S_1=2$, we must have $\mathsf{M}_0=1$. Also set $\mathsf{E}_{j_{1}}=(\mathsf{a},\mathsf{b})$, $\F^{j_{1}}=(T^{j_{1}}_{1}, \ldots, T^{j_{1}}_{S_{j_{1}}})$ and $\Fh^{j_1}=(\hat{T}^{j_{1}}_{1}, \ldots, \hat{T}^{j_{1}}_{S_{j_{1}}})$. By construction, observe that the following properties hold:
\begin{enumerate}
\item[--] $\mathsf{a}=0$ or $\mathsf{b}=0$; 
\item[--] we have $T^{j_{1}}_{i}=\hat{T}^{j_{1}}_{i}$ for every $i \in \iinterval{1}{S_{j_{1}}} \backslash \{\max(a,b)\}$;
\item[--] the tree $T^{j_{1}}_{\max(a,b)}$ is equal to $\hat{T}^{j_{1}}_{\max(a,b)}\rightarrow{\circled{f}}$ or to ${\circled{f}} \rightarrow \hat{T}^{j_{1}}_{\max(a,b)}$.
\end{enumerate}
As a consequence, for every $i \in \iinterval{1}{S_{j_1}}$, the height of the $i$-th tree of $\F^{j_1}$ is at least equal to the height of the $i$-th tree  of $\Fh^{j_1}$. By step (A), this property remains true for forests $\F^{j}$ and $\Fh^{j}$ for every $j=j_1-1, \ldots,0$. By taking $j=0$ we get the desired result.
\end{proof}

\subsection{Proof of \cref{thm:main}}
\label{sec:3.3}

\begin{proof}[Proof of \cref{thm:main}.]
For $\Xb \in \mathbb{X}_n$, we distinguish two cases depending on $\max_{1 \leq j \leq m} S_{j}(\Xb)$, which represents the maximum number of active vertices reached at some point. Set  $r_{n}=\lfloor n/(3 \log n) \rfloor$ for $n$ large enough and
\begin{align*}
\mathbb{X}^{1}_n &= \{\Xb= (\X_i)_{1 \leq i \leq m} \in  \mathbb{X}_n :  \max_{1 \leq j \leq m} S_{j}(\Xb) \geq r_{n} \},\\
\mathbb{X}^{2}_n &= \{\Xb= (\X_i)_{1 \leq i \leq m} \in  \mathbb{X}_n :  \max_{1 \leq j \leq m} S_{j}(\Xb) < r_{n} \}.
\end{align*}

\paragraph{High values.} Take $\Xb= (\X_i)_{1 \leq i \leq m} \in  \mathbb{X}^{1}_n$. By iteratively removing the first consecutive $+1,-1$'s we can transform $\Xb$ into a sequence $\Yb$ starting with $r_{n}$ times $+1$. By \cref{prop:decreaseheight}, this shows that
\begin{equation}
\label{eq:case1}
\Height(\T(\Xb)) \geqst  \Height(\mathcal{R}_{r_{n}}). 
\end{equation}
Then by \eqref{eq:H} we have 
$$\Es{\Height(\T(\Xb))}-e \log n \geq \Es{\Height(\mathcal{R}_{r_{n}})}-e \log n=e \log \left(\frac{r_n}{n}\right) - \frac{3}{2} \log \log r_{n} +O(1) \geq - 5 \log \log n$$
for $n$ sufficiently large.
Similarly,
\begin{multline*}
\Pr{\Height(\T(\Xb)) \geq e \log(n) - 5 \log \log n}  \\
\geq \Pr{\Height(\mathcal{R}_{r_{n}}) -e \log r_n + \frac{3}{2} \log \log r_{n} \geq e \log(n) - 5 \log \log n - e \log r_{n} + \frac{3}{2} \log \log r_{n}},
\end{multline*} 
which by \eqref{eq:H} goes to $1$ as $ n \rightarrow \infty$ uniformly in $\Xb \in  \mathbb{X}^{1}_n$, since $ e \log(n) - 5 \log \log n - e \log r_{n} + \frac{3}{2} \log \log r_{n} \rightarrow -\infty$.

\paragraph{Low values.}  Take $\Xb= (\X_i)_{1 \leq i \leq m} \in  \mathbb{X}^{2}_n $. In this case, we lower bound $\Height(\T(\Xb))$ by the height of a uniform active vertex of $\T_{m}(\Xb)$, which by \citet[Eq.~(5)]{BBKK23+} has the same law as
$\sum_{i=1}^{m}Y_{i} \mathbbm{1}_{\Xb_{i}=1}$,
where $(Y_{i})_{1 \leq i \leq m}$ are independent Bernoulli random variables of respective parameters $(1/S_{i}(\Xb))_{1 \leq i \leq m}$. 
Since $\# \{1 \leq i \leq m : \X_{i}=1\} =n$ by definition of $ \mathbb{X}_n$, it follows that  
$$\Height(\T(x)) \geqst \mathsf{Bin}(n,1/r_{n}),
$$
where $\mathsf{Bin}(n,1/r_{n})$ is a Binomial random variable with $n$ trials and success probability $1/r_n$.
Then, 
$$\Es{\Height(\T(\Xb))}- e \log n \geq {n/r_n-e\log n \geq } (3-e) \log (n)  \geq - 5 \log \log n.
$$

Next, we use the following concentration inequality for Bernoulli random variables due to Bennett;
see e.g.~\citet[Theorem 2.9]{boucheronconcentration}. We state it below, tailored for our purpose.

\begin{prop}[Bennett's inequality]
\label{prop:bennett}
Let $(Y_{i})_{1 \leq i \leq n}$ be independent Bernoulli random variables of respective parameters $(p_{i})_{1\leq i \leq n}$. Set $m_{n}=\sum_{i=1}^{n} p_{i}$ and assume that $m_{n}>0$. Then for every $t>0$:
\[\Pr{\sum_{i=1}^{n} Y_{i} >m_{n}+ t} \leq \exp \left( - m_{n} g \left( \frac{t}{m_{n}} \right) \right) \qquad \textrm{and} \qquad \Pr{\sum_{i=1}^{n} Y_{i}< m_{n}- t} \leq \exp \left( - m_{n} g \left( \frac{t}{m_{n}} \right) \right),\]
 where $g(u)=(1+u)\ln(1+u)-u$ for $u>0$.
\end{prop}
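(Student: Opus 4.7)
The plan is the classical Cramér--Chernoff exponential moment method applied twice. Set $S = \sum_{i=1}^n Y_i$. For the upper tail, I would fix $\lambda > 0$ and apply Markov's inequality to $e^{\lambda S}$, so that the problem reduces to controlling the moment generating function of $S$. By independence, $\Es{e^{\lambda S}} = \prod_{i=1}^n (1 + p_i(e^\lambda - 1))$, and the elementary bound $1+x \le e^x$ converts this into the Poisson-type estimate $\Es{e^{\lambda S}} \le \exp(m_n(e^\lambda - 1))$. The exponent to be minimized is then $m_n(e^\lambda - 1) - \lambda(m_n + t)$; the first-order condition gives $\lambda^{\ast} = \ln(1 + t/m_n)$, and substituting back produces exactly $-m_n\, g(t/m_n)$.

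For the lower tail, the scheme is symmetric: apply Markov's inequality to $e^{-\lambda S}$ with $\lambda > 0$, use the same Poisson-type MGF bound, and optimize. The optimizer is $\lambda^{\ast} = -\ln(1 - t/m_n)$, valid for $0 < t < m_n$ (if $t \ge m_n$ the probability vanishes since $S \ge 0$, and the claimed bound is trivial). The analogous computation yields
\[
\Pr{S < m_n - t} \le \exp\bigl(-m_n\, g(-t/m_n)\bigr),
\]
with $g$ extended to $(-1, \infty)$ by the same formula $g(u) = (1+u)\ln(1+u) - u$.

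The only step that is not completely mechanical -- and thus the main ``obstacle'' -- is closing the gap between this and the stated form $\exp(-m_n\, g(t/m_n))$: I need to show $g(-u) \ge g(u)$ for $u \in [0, 1)$, which weakens the sharper Chernoff lower-tail bound into the form appearing in the proposition. I would prove this by setting $h(u) = g(-u) - g(u)$, noting $h(0) = 0$, and checking
\[
h'(u) = -g'(-u) - g'(u) = -\ln(1 - u) - \ln(1 + u) = -\ln(1 - u^2) \ge 0,
\]
so that $h$ is non-decreasing on $[0, 1)$ and hence $h \ge 0$ there. All other calculations are entirely routine, so I do not anticipate any genuine difficulty beyond this one-line monotonicity check.
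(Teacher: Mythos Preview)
Your proof is correct and is the standard Cram\'er--Chernoff argument. Note, however, that the paper does not give its own proof of this proposition: it is stated as a quoted result with a citation to \cite[Theorem~2.9]{boucheronconcentration}, so there is no ``paper's proof'' to compare against. Your argument is precisely the one found in that reference (and most textbooks), including the lower-tail step of first obtaining $\exp(-m_n g(-t/m_n))$ and then weakening via $g(-u)\ge g(u)$ on $[0,1)$.
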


Hence, setting $s_{n}=n/r_{n}$, Bennett's inequality gives that for every $t > 0$, it holds that $ \Pr{\mathsf{Bin}(n,1/r_{n}) < s_{n}-t } \leq \exp(- s_{n}g(t/s_{n}))$ with $g(u)=(1+u) \ln(1+u)-u$. Applying this inequality with $t={s_n-e\log n }\geq (3-e) \log n$ yields that
$$
\Pr{\Height(\T(\Xb)) < e \log(n)  - 5 \log \log n}  \leq \Pr{\Height(\T(\Xb)) < e \log(n) }  \leq  \exp (-3 \log (n) g(1-e/3)),
$$
which goes to $0$ as $ n \rightarrow \infty$ uniformly in $\Xb \in  \mathbb{X}^{2}_n$.
\end{proof}

\section{Open questions}
\label{sec:open}

We conclude by mentioning several natural directions for future research.
\begin{enumerate}
\item[(1)] \cref{thm:main} shows that for every $\Xb \in \mathbb{X}_n$, $\Height(\T(\Xb))$ is asymptotically at least of the same order as $\Height(\mathcal R_n)$. It is thus natural to ask: do we have
$$ \inf_{n \geq 3} \left( \min_{\Xb \in \mathbb{X}_n}\Es{\Height(\T(\Xb))}  - \Es{\Height(\mathcal{R}_n)} \right) \geq 0 \ ?$$
This would require new ideas and we leave this for future work.
\item[(2)] We have seen in \cref{sec:RRT_vs_alternate} that for $\Xb \in \mathbb{X}_n$,  $\Height(\T(\Xb))$ does not dominate $\Height(\mathcal R_n)$ because of small heights. What is the  
 smallest $h_n$ such that
\[\Height(\mathcal{R}_n) \preceq_{\mathrm{st}} \max(h_n,\Height(\T(\Xb)))\]
for every $\Xb\in \mathbb{X}_n$? An extension of the argument of \cref{sec:RRT_vs_alternate} shows that $h_n\to\infty$.
\item[(3)] What is the smallest $N_n$ such that
\[\Height(\mathcal{R}_n) \preceq_{\mathrm{st}} \Height(\T(\Xb))\]
for every $\Xb\in\mathbb{X}_{N_n}$? 
The argument of \cref{sec:RRT_vs_alternate} shows that $ \liminf_{n \rightarrow \infty} N_n\ /(n\log_2 n) \geq 1$. Indeed, following the argument of \cref{sec:RRT_vs_alternate}, we must have $\Pr{\Height(\mathcal{R}_n) = 1} \geq \Pr{\Height(\mathcal{A}_{N_n}) = 1}$, where $\Pr{\Height(\mathcal{R}_n) =1} = {1}/ n!$ and  $\Pr{\Height(\mathcal{A}_{N_n} )= 1}= {1}/{2^{N_n-1}}$, which implies the claim.
\end{enumerate}

\nocite{*}
\bigskip  
\bibliographystyle{plainnat}
\bibliography{biblio}

\end{document}